\newtheorem{theorem}{Theorem}[section]
\newtheorem{lemma}[theorem]{Lemma}
\newtheorem{proposition}[theorem]{Proposition}
\newtheorem{definition}[theorem]{Definition}
\newtheorem{prop}{Proposition}[section]
\newtheorem{remark}[prop]{Remark}
\makeatletter \@addtoreset{equation}{section} \makeatother
\def\ppt{\frac{\partial}{\partial t}}
\def\FF{{\mathrm F}}
\def\RR{{\mathrm R}}
\def\WW{{\mathrm W}}
\def\EE{{\mathrm E}}
\def\Rc{{\mathrm {Rc}}}
\def\Rm{{\mathrm {Rm}}}
\begin{document}

\title{Mean Value Inequalities and Conditions to Extend Ricci Flow}

\author{Xiaodong Cao
}

\address{Department of Mathematics,
 Cornell University, Ithaca, NY 14853-4201}
\email{cao@math.cornell.edu, hungtran@math.cornell.edu}

\author{Hung Tran}


\renewcommand{\subjclassname}{%
  \textup{2000} Mathematics Subject Classification}
\subjclass[2000]{Primary 53C44}

\date{\today}

\begin{abstract} This paper concerns  conditions related to the first finite singularity time of a Ricci flow solution on a closed manifold. In particular, we provide a systematic approach to the mean value inequality method, suggested by N. Le  \cite{LN10} and F. He  \cite{FH12}. We also display a close connection between this method and time slice analysis as in \cite{wangb12}. As an application, we prove several inequalities for a Ricci flow solution on a manifold with nonnegative isotropic curvature.
\end{abstract}
\maketitle

\markboth{Xiaodong Cao and Hung Tran} {Mean Value Inequalities and Conditions to Extend Ricci Flow}
\section{\textbf{Introduction}}
Let $M^n$ be a n-dimensional closed manifold and $g(t)$, $0\leq t<T$, be a one-parameter family of smooth Riemannian metrics on M satisfying $$\ppt g(t)=-2\text{Rc}(t),$$ then $(M,g(t))$ is said to be a solution to the Ricci flow,  which was first introduced by R. Hamilton in \cite{H3}. The Ricci flow has been studied extensively, particularly in the last decade thanks to the seminal contribution by G. Perelman \cite{perelman1}. As a weakly parabolic system, the Ricci flow can develop finite-time singularities. We say that $(M, g(t))$, $t\in[0,T)$, is a maximal solution if it becomes singular at time $T$. An intriguing but still open problem concerns the precise behavior of the curvature tensor as the flow approaches the finite singular time.\\

It was first shown by Hamilton that the norm of the Riemannian curvature tensor $|\Rm|$ must blow up approaching the first finite singular time \cite{H3}. More recently, by using an application of the non-collapsing result of G. Perelman \cite{perelman1}, N. Sesum was able to prove that if the norm of Ricci curvature $|\Rc|$ is bounded then the flow can be extended \cite{sesum051}. Since then the obvious generalized problem of whether the scalar curvature must behave similarly has received extensive attention. The question is still open but considerable progress has been made: the Type I case is resolved by J. Ender, R. Muller and P. Topping \cite{emt10},  also independently by Q. Zhang and the first author in \cite{cz10, cao11},  while the K\"{a}hler case is solved by Z. Zhang \cite{zhangz10}. There are various other relevant results such as estimates relating the scalar curvature and the Weyl tensor \cite{cao11}, comparable growth rates of different components of the curvature tensor \cite{wangb12}, \cite{wangb08}, and integral conditions by N. Le and N. Sesum \cite{ls12}.\\

Following the mean value inequality trick of Le \cite{LN10} for the mean curvature flow, F. He developed a logarithmic-improvement condition for the Ricci flow \cite{FH12}. Our contribution is to provide a more systematic treatment of the mean value inequality method  and to find a close connection to the time slice analysis method suggested by B. Wang \cite{wangb12}. Then we apply our analysis to a particular context of Ricci flow with a uniform-growth condition defined below.\\

Throughout the rest of this paper, $M$ is a closed smooth  Riemannian manifold and $T$ is some positive finite time. We will use the following notation:
$$ Q(t)=\sup_{M\times \{t\}}|\text{Rm}|,~P(t)=\sup_{M\times \{t\}}|\text{Rc}|,~O(t)=\sup_{M\times \{t\}}|\RR|.$$
Our first theorem gives a logarithmic-improvement condition relating the Ricci curvature and the Riemannian curvature tensor.

\begin{theorem}\label{mt1}
Let $(M,g(t))$, $t\in [0,T)$, be a Ricci flow solution on  M. If for some $0\leq p\leq 1$, we have $$\int_{0}^{T}\frac{P(t)}{(\ln(1+Q(t)))^{p}}dt< \infty,$$  then the solution can be extended past time T.
\end{theorem}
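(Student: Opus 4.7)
I plan to argue by contradiction: assume the flow becomes singular at $T$, so by Hamilton's criterion $Q(t)\to\infty$ as $t\to T^-$. From the standard evolution
$$(\partial_t-\Delta)|\text{Rm}|^2\leq -2|\nabla\text{Rm}|^2 + C_n|\text{Rm}|^3$$
and the maximum principle, one obtains the doubling-time bound $Q'(t)\leq C_n Q(t)^2$, whence $Q(t)\geq c/(T-t)$ near $T$; in particular $\ln(1+Q(t))$ diverges, and the logarithmic weight appearing in the integral hypothesis is genuinely active.

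The core of the argument is to upgrade this differential inequality for $Q$ via the mean value inequality method of Le \cite{LN10} and He \cite{FH12}. To handle the cubic reaction term $|\text{Rm}|^3$, I would apply a parabolic Moser iteration to the test function $u_\alpha=(|\text{Rm}|^2+1)^\alpha$ on small cylinders centered at the spatial peak of $|\text{Rm}|$, using Perelman's $\kappa$-non-collapsing \cite{perelman1} to control the Sobolev constant uniformly along the flow, and combining it with the time-slice analysis of B. Wang \cite{wangb12}. The essential point is that the $L^\infty$ control $P(t)$ on $|\text{Rc}|$ governs the distortion of the evolving metric and, after iteration, enters as the coefficient of the linear term in a subsolution inequality. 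After tuning $\alpha$ and the cylinder scale to match the exponent $p$, this should yield a differential inequality of the form
$$\frac{d}{dt}\ln(1+Q(t))\leq \frac{C\,P(t)}{(\ln(1+Q(t)))^{p}}$$
on some subinterval $[\tau,T)$.

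Integrating this inequality from $\tau$ to $t$ gives
$$\ln(1+Q(t))-\ln(1+Q(\tau))\leq C\int_{\tau}^{t}\frac{P(s)}{(\ln(1+Q(s)))^{p}}\,ds,$$
and the right-hand side remains finite as $t\to T^-$ by hypothesis, contradicting $Q(t)\to\infty$. The main obstacle is the mean value step: the naive pointwise evolution $(\partial_t-\Delta)u_\alpha\leq C\alpha|\text{Rm}|\,u_\alpha$ has an $L^\infty$ coefficient of the wrong order in $|\text{Rm}|$, and converting it into a bound involving only $\sup|\text{Rc}|$ with the correct logarithmic improvement demands a delicate iteration whose constants must be tracked carefully in $\alpha$ and in the cylinder geometry in order to extract the sharp $(\ln(1+Q))^{-p}$ weight.
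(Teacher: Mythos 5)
There is a genuine gap: the entire content of the theorem is hidden in your sentence that the iteration ``should yield'' the differential inequality $\frac{d}{dt}\ln(1+Q(t))\leq C\,P(t)/(\ln(1+Q(t)))^{p}$. That inequality is never derived, and it is in fact stronger than anything available: multiplying by $(\ln(1+Q))^{p}$ and integrating it would give $(\ln(1+Q(t)))^{1+p}\lesssim 1+\int_{0}^{t}P(s)\,ds$, which improves on the sharp known estimate $\ln(\sup_{[0,t]}Q)\lesssim \int_{0}^{t}P(s)\,ds$ (B.~Wang's time-slice bound, Lemma \ref{expRc} here), and nothing in your sketch produces that extra gain. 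The Moser-iteration route also fails structurally: the subsolution inequality for $|\mathrm{Rm}|^{2}$ has reaction coefficient $|\mathrm{Rm}|$, not $|\mathrm{Rc}|$, and no tuning of $\alpha$ or of cylinder scales converts an $L^{\infty}$ coefficient of order $|\mathrm{Rm}|$ into one of order $P(t)$. What $P(t)$ actually controls is the metric distortion $\partial_t g=-2\mathrm{Rc}$, and exploiting that requires a compactness/blow-up contradiction argument (rescale at a curvature-doubling time, use $\kappa$-non-collapsing and Cheeger--Gromov--Hamilton compactness), not a local iteration. Finally, the weight exponent $p$ should never enter the PDE estimate at all.

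The argument the paper runs is: (i) prove the \emph{unweighted} mean value inequality $\sup_{[0,t]}Q\leq C_{0}\int_{0}^{t}Q(u)P(u)\,du+C_{1}$ (Proposition \ref{RmRc}), which follows by contradiction and rescaling from the time-slice estimate that whenever $Q$ doubles from a sufficiently large level one has $\int P\,dt\geq \epsilon_{0}(n,\kappa)$ (the Claim in Lemma \ref{subRmRc}, i.e.\ \cite[Lemma 3.2]{wangb12}, with $\kappa$ supplied by Perelman); (ii) feed this into the Osgood-type ODE lemma of Le (Lemma \ref{namtrick}) by writing $Q(u)P(u)=\psi(Q(u))\cdot P(u)/(\ln(1+Q(u)))^{p}$ with $\psi(s)=s(\ln(1+s))^{p}$, noting $\int_{1}^{\infty}ds/\psi(s)=\infty$ exactly because $p\leq 1$; the hypothesis $\int_{0}^{T}P/(\ln(1+Q))^{p}\,dt<\infty$ then forces $\sup_{[0,T)}Q<\infty$, contradicting Hamilton's criterion $Q(t)\to\infty$ at a singular time. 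So the logarithmic improvement is obtained for free from the Bihari/Osgood step applied to the unweighted inequality; your plan tries to build it into the pointwise curvature estimate, where it cannot be obtained, and the step where you would need it is exactly the step you leave unproved.
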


Since we are interested in the behavior of the scalar curvature at a singular time, this motivates the following definition.

\begin{definition} 
\label{unigrowth}
A Ricci flow solution on a closed manifold is said to satisfy the \textbf{uniform-growth} condition if it develops a singularity in finite time, and any singularity model obtained by parabolic rescaling at the scale of the maximum curvature tensor must has non-flat scalar curvature.  
\end{definition}

Under the Ricci flow, the uniform-growth condition  generalizes both Type I and (non-flat) nonnegative isotropic curvature (NIC) conditions. Combining the above mean value inequality method with the uniform-growth condition  yields the following logarithmic-improvement result.

\begin{theorem}\label{mt2}
Let $(M,g(t))$, $t\in [0,T)$, be a Ricci flow solution satisfying the uniform-growth condition on  M. If for some $0\leq p\leq 1$, we have
\begin{equation}
\int_{0}^{T}\int_{M}\frac{|\RR|^{n/2+1}}{(\ln{(1+|\RR|)})^{p}} d\mu dt< \infty , 
\end{equation}
then  the solution can be extended past time T.  
\end{theorem}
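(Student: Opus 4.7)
The plan is to argue by contradiction: assume the flow fails to extend past $T$, so $Q(t)\to\infty$ as $t\nearrow T$, and reduce Theorem~\ref{mt2} to Theorem~\ref{mt1} in two steps. First, the uniform-growth condition lets me replace $Q$ and $P$ by $O$ up to constants; second, the mean value inequality machinery of the earlier sections converts the given spatial-temporal integral into the time-integral hypothesis of Theorem~\ref{mt1}.

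For step one, I would show there exist $c>0$ and $t_{0}<T$ with $O(t)\geq c\,Q(t)$ on $[t_{0},T)$. Suppose not; extract $t_{k}\to T$ and basepoints $x_{k}$ realizing $|\Rm|(x_{k},t_{k})=Q(t_{k})$ with $O(t_{k})/Q(t_{k})\to 0$. Parabolic rescaling at scale $Q(t_{k})$ about $(x_{k},t_{k})$, together with Perelman's $\kappa$-non-collapsing, Shi's derivative estimates, and Hamilton's compactness theorem, produces a pointed smooth limiting Ricci flow on a backward parabolic cylinder with $|\Rm|(0,0)=1$ and $\RR(\cdot,0)\equiv 0$. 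Because the minimum of the scalar curvature is non-decreasing along the Ricci flow on a closed manifold, the fixed lower bound $\min_{M}\RR(\cdot,0)$ on the original flow, divided by $Q(t_{k})\to\infty$, tends to zero, so the limit satisfies $\RR\geq 0$; then, substituting $\RR(\cdot,0)\equiv 0$ into $\partial_{t}\RR=\Delta\RR+2|\Rc|^{2}$ and requiring $\RR\geq 0$ just below time zero forces $\Rc(\cdot,0)\equiv 0$, whence by uniqueness of Ricci flow (forward by Hamilton, backward by Kotschwar) the limit is static Ricci-flat and $\RR\equiv 0$ throughout, contradicting the uniform-growth condition. Thus $P(t)\leq C_{n}Q(t)\leq C\,O(t)$ and $\ln(1+Q(t))\leq C'\ln(1+O(t))$ on $[t_{0},T)$, so it suffices to show $\int_{t_{0}}^{T}O(t)/(\ln(1+O(t)))^{p}\,dt<\infty$.

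For step two, I would apply the mean value inequality for the scalar curvature developed in the earlier sections. Since $|\RR|$ is a subsolution of a heat-type equation whose reaction $2|\Rc|^{2}$ is controlled under uniform growth by $C\,O(t)\,|\Rm|$, and Perelman's non-collapsing yields uniformly bounded Sobolev constants, a parabolic Moser iteration at the critical exponent $n/2+1$, adapted to carry the slowly varying weight $1/(\ln(1+|\RR|))^{p}$, should produce the pointwise inequality
\begin{equation*}
\frac{|\RR|(x_{0},t_{0})^{n/2+1}}{(\ln(1+|\RR|(x_{0},t_{0})))^{p}}\leq C\,r^{-(n+2)}\int_{t_{0}-r^{2}}^{t_{0}}\int_{M}\frac{|\RR|^{n/2+1}}{(\ln(1+|\RR|))^{p}}\,d\mu\,ds.
\end{equation*}
Since $R\mapsto R^{n/2+1}/(\ln(1+R))^{p}$ is eventually monotone, taking the spatial supremum yields a bound on $O(t_{0})^{n/2+1}/(\ln(1+O(t_{0})))^{p}$, and integrating in $t_{0}$ gives $\int_{0}^{T}O(t)^{n/2+1}/(\ln(1+O(t)))^{p}\,dt<\infty$. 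Because $O(t)\to\infty$ near $T$, one has $O\leq O^{n/2+1}$ there, so $\int_{t_{0}}^{T}O(t)/(\ln(1+O(t)))^{p}\,dt<\infty$. Theorem~\ref{mt1} then extends the flow past $T$, the desired contradiction.

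The main obstacle is the mean value inequality with logarithmic weight. Standard Moser--Nash iteration handles power-type integrands for subsolutions with linear (or absent) reaction, but here both the reaction $2|\Rc|^{2}$ and the slowly varying weight $1/(\ln(1+|\RR|))^{p}$ demand careful treatment: the reaction is absorbed via the uniform-growth bound and a judicious cutoff, while the weight must be threaded through the iteration without degradation so that it persists from the $L^{n/2+1}$ average to the pointwise supremum. Once this weighted mean value inequality is in hand, the blow-up analysis of step one and the final invocation of Theorem~\ref{mt1} are essentially bookkeeping.
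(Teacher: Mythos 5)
Your overall strategy (reduce to Theorem \ref{mt1} via a weighted pointwise mean value inequality proved by Moser iteration) is not the paper's route, and its crux is left unproved in a way I do not think can be repaired as sketched. The claimed estimate $\frac{|\RR|(x_0,t_0)^{n/2+1}}{(\ln(1+|\RR|(x_0,t_0)))^p}\leq C r^{-(n+2)}\int_{t_0-r^2}^{t_0}\int_M \frac{|\RR|^{n/2+1}}{(\ln(1+|\RR|))^p}\,d\mu\,ds$ with $C$ uniform as $t_0\to T$ is exactly the hard point, and the ingredients you cite do not deliver it: the uniform-growth condition is a statement about singularity models only and gives \emph{no} pointwise pinching, so the reaction term in $\partial_t \RR=\Delta \RR+2|\Rc|^2$ cannot be bounded by $C\,O(t)|\Rm|$ (that kind of control, $|\Rm|\leq c\,\RR$, is precisely what Lemma \ref{picestimate} provides under NIC, not under uniform growth); the only unconditional bound is $|\Rc|^2\leq c_nQ(t)|\Rm|$, whose coefficient blows up at $T$ and destroys the iteration constants. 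Likewise, $\kappa$-non-collapsing alone does not give uniformly bounded Sobolev constants at a fixed scale $r$ near the singular time without curvature bounds, so the Moser constants are not under control. Since you yourself flag this weighted iteration as ``the main obstacle,'' the proposal as it stands has a genuine gap at its central step.

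There is also a secondary problem in step one: to produce a limit you blow up at arbitrary times $t_k$ where $O(t_k)/Q(t_k)\to 0$, but Hamilton compactness needs curvature control on backward parabolic cylinders, i.e.\ a well-chosen sequence with $|\Rm|(x_k,t_k)$ comparable to $\sup_{M\times[0,t_k]}|\Rm|$; at such well-chosen times the ratio $O/Q$ need not be small, so the comparability $O(t)\geq c\,Q(t)$ near $T$ is not established (and it is stronger than what uniform growth is known to give). The paper's proof avoids both difficulties: Lemma \ref{picmvi} proves the \emph{integral} mean value inequality $\sup_{[0,t]}O\leq C_0\int_0^t\int_M|\RR|^{n/2+2}d\mu\,ds+C_1$ by a rescaling/contradiction argument (the exponent $n/2+2$ makes the rescaled spacetime integral decay like $Q_i^{-1}$, so the dominated convergence theorem forces a scalar-flat singularity model, contradicting uniform growth), and then the logarithmic weight is threaded in purely at the ODE level: writing $|\RR|^{n/2+2}\leq \psi(O(s))\,|\RR|^{n/2+1}/(\ln(1+|\RR|))^p$ with $\psi(s)=s(\ln(1+s))^p$, the Osgood-type Lemma \ref{namtrick}/\ref{picnamtrick} and the hypothesis give a uniform bound on $O$, which again contradicts uniform growth at a singular time. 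No parabolic iteration, no pointwise pinching, and no comparability $O\geq cQ$ are needed. I would suggest either adopting that blow-up/Osgood mechanism or supplying a complete proof of your weighted mean value inequality, which at present is unsupported.
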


The paper is organized as follows. In Section 2, we discuss mean value inequalities and provide the proof of Theorem \ref{mt1}. Section 3 displays a close connection to the time-slice analysis and thus gives another proof of the above result as well as  some independent estimates. In Section 4 we apply our method to the context of nonnegative isotropic curvature and its generalization.

\section{Mean Value Inequalities}


In this section, we first  generalize a trick in \cite{LN10} that connects a mean value inequality with the finiteness of a function.
\begin{lemma}
\label{namtrick}
Let $f,G:[0,T)\rightarrow [0,\infty)$ be continuous functions and $\psi: [0,\infty)\rightarrow [0,\infty)$ be a non-decreasing function such that
\begin{equation}
\int_{1}^{\infty}\frac{1}{\psi(s)}ds=\infty. 
\end{equation}
If there is a mean value inequality of the form
\begin{equation}
f(t)\leq  C_{1} \int_{0}^{t}\psi(f(s))G(s)ds+C_{2}=h(t)
\end{equation}
and $\int_{0}^{T}G(t)dt<\infty$, then $\limsup_{t\rightarrow T}f(t)<\infty$.
\end{lemma}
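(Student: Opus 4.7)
The plan is to reduce the integral inequality to a differential inequality and solve it via the classical Bihari--Osgood (separation of variables) technique. First, I would introduce the right-hand side itself as an auxiliary function: let
\begin{equation*}
h(t) = C_{1}\int_{0}^{t}\psi(f(s))G(s)\,ds + C_{2},
\end{equation*}
so that $f(t)\leq h(t)$, $h$ is absolutely continuous, and $h'(t)=C_{1}\psi(f(t))G(t)$ for a.e.\ $t$. Because $\psi$ is non-decreasing and $f\leq h$, this upgrades to the autonomous-in-$h$ estimate
\begin{equation*}
h'(t) \leq C_{1}\,\psi(h(t))\,G(t) \quad \text{for a.e.\ } t\in[0,T).
\end{equation*}

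Next I would introduce the primitive $\Psi(x)=\int_{h(0)}^{x}\frac{du}{\psi(u)}$, shifting the lower limit upward if necessary to guarantee $\psi>0$ on the range of integration (this is possible because the hypothesis $\int_{1}^{\infty}ds/\psi(s)=\infty$ forces $\psi$ to be strictly positive on some tail $[a,\infty)$). The divergence hypothesis is equivalent to $\Psi(x)\to\infty$ as $x\to\infty$, so $\Psi$ is a continuous bijection from $[h(0),\infty)$ onto $[0,\infty)$. Dividing the differential inequality by $\psi(h(t))$ and integrating in $t$ yields
\begin{equation*}
\Psi(h(t)) = \int_{0}^{t}\frac{h'(s)}{\psi(h(s))}\,ds \leq C_{1}\int_{0}^{t}G(s)\,ds \leq C_{1}\int_{0}^{T}G(s)\,ds,
\end{equation*}
and the right-hand side is finite by hypothesis. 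Applying $\Psi^{-1}$ then bounds $h(t)$, hence $f(t)$, uniformly in $t\in[0,T)$, which is exactly the conclusion $\limsup_{t\to T}f(t)<\infty$.

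The only delicate point---and the main obstacle to making the above rigorous---is the possible vanishing of $\psi$ on some initial interval, which would prevent one from dividing by $\psi(h(t))$ when $h$ is still small. The standard workaround is either to replace $\psi$ by $\psi_{\varepsilon}:=\psi+\varepsilon$, carry out the argument uniformly, and then send $\varepsilon\to 0^{+}$; or to split $[0,T)$ into the region where $h(t)$ lies below the threshold past which $\psi$ becomes strictly positive (where $h$ is trivially bounded by continuity together with the assumption that $\int_{0}^{T}G<\infty$) and its complement, where the above separation-of-variables computation applies verbatim. Either device is routine, so the bulk of the proof really is the three-line ODE comparison above.
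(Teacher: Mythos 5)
Your argument is correct and takes essentially the same route as the paper: the paper's proof substitutes $s=h(t)$ in $\int_0^{T_0}C_1G(t)\,dt$ and uses $\psi(f)\le\psi(h)$ (with $\psi$ non-decreasing and $f\le h$), which is exactly your Bihari--Osgood integration of $h'/\psi(h)$ followed by inverting $\Psi$. Your discussion of the possible vanishing of $\psi$ is a refinement the paper silently skips (it divides by $\psi(f(t))$ without comment, and in the intended applications $\psi(s)=s\ln(1+s)^p$ this is harmless), so it does not change the substance.
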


\begin{proof}
For any $T_{0}<T$,
\begin{align*}
\int_{0}^{T_{0}}C_{1}G(t)dt&=\int_{0}^{T_{0}}\frac{1}{\psi(f(t))}C_{1}\psi(f(t))G(t)dt\\
&=\int_{h(0)}^{h(T_{0})}\frac{1}{\psi(f(h^{-1}(s)))}ds \text{ (let $s=h(t)$, $ds=h'(t)dt$)}\\
& \geq \int_{h(0)}^{h(T_{0})}\frac{1}{\psi(s)}ds.
\end{align*}
The last inequality is because  of $f(t)\leq h(t)$. If $\int_{0}^{T}C_{1}G(t)dt<\infty$, then by the choice of $\psi$, $h(T_{0})\leq C<\infty$. Now by the mean value inequality, $f(T_{0})\leq h(T_{0})\leq C$. Since $T_{0}$ is arbitrary, $\sup_{[0,T)}f\leq C<\infty$.
\end{proof}
In the next several lemmas, we will establish a mean value inequality connect $Q(t)$ and $P(t)$. We first need the following doubling time estimate.
\begin{lemma}\cite[Lemma 6.1]{chowluni}
\label{doublingtime}
If $(M,g(t))$ is a Ricci flow on a closed manifold then for all $t\in[0,\frac{1}{16Q_{0}})$, 
\begin{equation}
Q(t)\leq 2Q(0).
 \end{equation} 
\end{lemma}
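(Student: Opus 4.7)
The plan is to derive an ordinary differential inequality for $Q(t)$ from the parabolic evolution of $|\Rm|^2$ under the Ricci flow, and then integrate it to produce the doubling estimate.

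First, I would invoke the standard pointwise inequality of Hamilton,
\begin{equation*}
\left(\frac{\partial}{\partial t} - \Delta\right)|\Rm|^2 \leq -2|\nabla \Rm|^2 + 16\,|\Rm|^3,
\end{equation*}
which follows from the reaction term $\Rm \ast \Rm$ in the evolution of $\Rm$ together with a pointwise algebraic bound of the form $|\Rm \ast \Rm| \leq c_{n}|\Rm|^{2}$.

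Next, I would apply Hamilton's maximum principle for scalar functions on the closed manifold $M$ (dropping the non-positive gradient term) to obtain
\begin{equation*}
\frac{d}{dt}Q^{2}(t) \leq 16\, Q^{3}(t), \qquad \text{equivalently} \qquad \frac{d}{dt}Q(t) \leq 8\, Q^{2}(t),
\end{equation*}
interpreted via the upper-right Dini derivative, since $Q$ is only locally Lipschitz in time. Separating variables gives $-\frac{d}{dt}(1/Q(t)) \leq 8$, which integrates to
\begin{equation*}
Q(t) \leq \frac{Q_{0}}{1 - 8\, Q_{0}\, t},
\end{equation*}
valid while the denominator is positive. For $t \in [0, \tfrac{1}{16 Q_{0}})$, we have $8 Q_{0} t < \tfrac{1}{2}$, so the denominator exceeds $\tfrac{1}{2}$ and hence $Q(t) \leq 2 Q_{0}$, as claimed.

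The only (rather mild) obstacle is justifying the maximum principle when $Q$ is merely Lipschitz rather than $C^{1}$; this is handled by the standard forward-difference-quotient trick of Hamilton. Everything else is a routine ODE integration, with the factor $16$ in the time bound dictated precisely by the constant $16$ appearing in the pointwise reaction term (any dimensional constant $c_{n}$ there would simply rescale $\tfrac{1}{16 Q_{0}}$ accordingly).
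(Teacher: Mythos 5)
Your proof is correct and is precisely the standard argument behind the cited result (Chow--Lu--Ni, Lemma 6.1), which the paper quotes without proof: the evolution inequality $\partial_t |\Rm|^2 \leq \Delta |\Rm|^2 + 16|\Rm|^3$ plus the maximum principle give $\frac{d}{dt}Q \leq 8Q^2$ in the Dini sense, which integrates to $Q(t) \leq Q_0/(1-8Q_0 t) \leq 2Q_0$ on $[0,\tfrac{1}{16Q_0})$. Nothing further is needed.
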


We also need the following definition, for detailed discussion, see \cite{perelman1}.


\begin{definition}
A  Ricci flow solution $(M, g(t))$, $t\in (0, T)$,    is said $\kappa$-noncollapsed  {\em(}on all scales{\em)},
if   $\forall g(t)$, every metric ball $B$ of radius $r$, with $|Rm|(x) \le r^{-2}$ for every $x\in B$, has volume at least $\kappa r^n$.  
\end{definition}

We are now ready to state our first key technical lemma.

\begin{lemma}
\label{subRmRc}
Let $\Sigma(M,\kappa,C_{0})=\{g(t)| t\in[0,1], \text{$g(t)$ is $\kappa$-noncollapsed}, ~ Q(0)\leq C_{0}\}$ be a set of complete Ricci flow solutions on  M. Then there exists a constant $C=C(n,\kappa,C_{0})$ such that for any $g(t)\in \Sigma$,
\begin{equation}
\sup_{[0,1]}Q(t)\leq C\int_{0}^{1}Q(t)P(t)dt+32C_{0}.
\end{equation}
\end{lemma}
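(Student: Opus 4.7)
The plan is to argue by contradiction via a rescaling and Hamilton--Cheeger--Gromov compactness argument. Assume no such $C=C(n,\kappa,C_0)$ exists; then for each integer $k\ge 1$ there is $g_k\in\Sigma(M,\kappa,C_0)$ satisfying
$$K_k \;:=\; \sup_{[0,1]} Q_k(t) \;>\; k\int_0^1 Q_k(t)P_k(t)\,dt \;+\; 32C_0.$$
Choose $(x_k,t_k)$ realizing the sup, so $|\Rm|_{g_k}(x_k,t_k)=K_k$. Since $K_k>32C_0\ge 2Q_k(0)$, the doubling-time estimate (Lemma 2.2) forces $t_k\ge 1/(16C_0)$, so the peak occurs a definite positive distance past the initial slice.

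I would split the argument on whether $\{K_k\}$ is bounded. If it is, the $g_k$ have a uniform curvature bound on $[0,1]$, are $\kappa$-noncollapsed, and satisfy $Q_k(0)\le C_0$; Hamilton's compactness theorem then yields, along a subsequence, a pointed smooth limit Ricci flow $g_\infty(t)$ on $[0,1]$. Since $\int_0^1 Q_kP_k\,dt < K_k/k \to 0$, passing to the limit gives $\Rc_{g_\infty}\equiv 0$ (pointwise wherever $\Rm_{g_\infty}\ne 0$, and trivially elsewhere); hence $g_\infty$ is static with $Q_{g_\infty}(t)=Q_{g_\infty}(0)\le C_0$. But convergence at the chosen basepoints gives $|\Rm|_{g_\infty}(x_\infty,t_\infty)=\lim K_k>32C_0$, a contradiction.

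If instead $K_k\to\infty$ I would perform parabolic rescaling at the peak, setting $\tilde g_k(s):=K_k\,g_k(t_k+s/K_k)$ on the time interval $[-K_kt_k,\,K_k(1-t_k)]$; its left endpoint tends to $-\infty$ by the lower bound on $t_k$. The rescaled flow satisfies $|\Rm|_{\tilde g_k}\le 1$ with equality at $(x_k,0)$, inherits $\kappa$-noncollapsing, and enjoys
$$\int_{-K_kt_k}^{0} \tilde Q_k(s)\tilde P_k(s)\,ds \;=\; \frac{1}{K_k}\int_0^{t_k} Q_k(t)P_k(t)\,dt \;<\; \frac{1}{k}\;\longrightarrow\;0.$$
Hamilton compactness produces an ancient pointed limit $\tilde g_\infty$, and the vanishing integral again forces $\Rc_{\tilde g_\infty}\equiv 0$. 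The limit is therefore a $\kappa$-noncollapsed, static, Ricci-flat complete manifold with $|\Rm|_{\tilde g_\infty}(x_\infty,0)=1$.

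The main obstacle is closing off Case B. The extra input the argument must exploit is that at the rescaled initial slice $s_k=-K_kt_k$ one has $|\Rm|_{\tilde g_k}(\cdot,s_k)\le C_0/K_k\to 0$; roughly, the rescaled flow starts essentially flat and should take time of order $K_k$ to develop curvature of order one. Combined with the doubling-time estimate applied to the rescaled flow (which bounds how quickly curvature can appear from small initial data) and the static, time-independent structure of the Ricci-flat limit, one should be able to force $\tilde g_\infty$ to be flat, contradicting $|\Rm|(x_\infty,0)=1$. I expect this propagation-from-infinity step---balancing blow-up rate, initial data, and $\kappa$-noncollapsing---to be the delicate technical heart of the proof, and it may be cleanest to relocate the blow-up to the first time $Q_k(t)$ exceeds a fixed multiple of $C_0$ rather than to the global maximum, so that the rescaled initial data transfers cleanly into the limit.
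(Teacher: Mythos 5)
Your Case A is fine, but your Case B --- the essential case --- does not close, and the step you defer is precisely the mathematical content of the lemma. After rescaling at the peak you obtain an ancient, static, Ricci-flat, $\kappa$-noncollapsed limit with $|\Rm|(x_\infty,0)=1$, and this is \emph{not} a contradiction: complete non-flat Ricci-flat manifolds with Euclidean volume growth (e.g.\ Eguchi--Hanson type ALE spaces, made static in time) satisfy all of these properties. The extra input you propose --- that the rescaled flow is nearly flat at the rescaled initial time $s_k=-K_kt_k$ --- cannot be transferred to the limit, because $s_k\to-\infty$ and Cheeger--Gromov--Hamilton convergence only retains information on compact space-time sets; the same loss occurs for your suggested relocation of the base point, since the rescaled time elapsed between the ``half-maximum'' slice and the peak may also tend to infinity. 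Turning ``curvature cannot double while $\int P\,dt$ stays small'' into a quantitative statement is exactly the hard step, and it is what the paper does \emph{not} reprove: its proof quotes B.~Wang's time-slice estimate \cite[Lemma 3.2]{wangb12} (the Claim with the constant $\epsilon_0(n,\kappa)$), asserting that between the first time $Q=D$ and the first subsequent time the maximum curvature changes by a factor $2$ one has $\int P\,dt>\epsilon_0$.

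Given that black box, the paper's argument is short and avoids compactness entirely at this stage: with $t_{i0}$ the last time before the peak at which $Q_i(t_{i0})=\tfrac12 Q_i$, one has $Q_i(t)>\tfrac12 Q_i>16C_0$ on $[t_{i0},t_i]$, so the contradiction hypothesis gives
\begin{equation*}
\epsilon_0<\int_{t_{i0}}^{t_i}P_i\,dt\;\le\;\frac{2}{Q_i}\int_0^1 Q_iP_i\,dt\;\le\;\frac{2}{a_i}\,\frac{Q_i-32C_0}{Q_i}\longrightarrow 0,
\end{equation*}
which is absurd. So your overall contradiction-plus-rescaling strategy is in the right spirit (it mirrors how such estimates are ultimately proved), but as written the proposal is incomplete: you must either invoke Wang's lemma as the paper does, or supply an independent propagation estimate (for instance via Perelman's pseudolocality or a more refined point-picking) showing that a $\kappa$-noncollapsed flow cannot raise its curvature maximum by a fixed factor while $\int P\,dt$ is arbitrarily small. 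Without that, the ``delicate technical heart'' you flag is a genuine gap, not a routine detail.
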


\begin{proof}
The proof is by contradiction. Suppose that the statement is false then there exists a sequence of $g_{i}(t)\in \Sigma$ and $a_{i}\rightarrow \infty$ such that 
\begin{equation*}
\sup_{[0,1]}Q_{i}(t)\geq a_{i}\int_{0}^{1}Q_{i}(t)P_{i}(t)dt+32C_{0}.
\end{equation*}
Let $Q_{i}=\sup_{[0,1]}Q_{i}(t)$ then we can find $(x_{i},t_{i})$ such that $Q_{i}$ is attained. Since $Q_{i}>32 C_{0}$ there exists $t_{i0}$ being the first time backward such that $Q_{i}(t_{i0})=\frac{1}{2}Q_{i}$. Consequently, for $t \in [t_{i0},t_{i}]$, $32C_{0}<Q_{i}<2Q_{i}(t)$, $Q_{i}(t_{i0})>16C_{0}$ and by Lemma \ref{doublingtime}, $t_{i0}>\frac{1}{16C_{0}}$. \\

{\bf Claim}: There exists a constant $\epsilon_{0}=\epsilon_{0}(n,\kappa)$ such that the following holds: for any $t_{0}>0$, $D\geq \max \{1/t_{0},\max_{[0,t_{0}]}Q\}$, let $t_{1}>t_{0}$ be the first time, if exists, such that $Q(t_{1})=D$, and $t_{2}>t_{1}$ be the first time, if exists, such that $|\ln(Q(t_{2})/Q(t_{1}))|=\ln{2}$, then      
\begin{equation*}
\int_{t_{1}}^{t_{2}}P(t)dt> \epsilon_{0}.
\end{equation*}

{\it Proof of claim:} This is essentially just a restatement of \cite[Lemma 3.2]{wangb12}. If there are no such $t_{1},t_{2}$, the statement is vacuously true. If they exist then we dilate the solution by $\tilde{g}(t)=Dg(t_{1}+t/D)$ then $\tilde{g}(t)$ satisfies the condition of the aforementioned result and the claim follows after rescaling back.\\

Applying the claim above yields
\begin{equation}
\label{subRmRc1}
\int_{t_{i0}}^{t_{i}}P_{i}(t)dt>\epsilon_{0}.
\end{equation}
Thus, 
\begin{equation}
\label{subRmRc2}
{Q}_{i}\geq 32C_{0}+a_{i}\int_{t_{i0}}^{t_{i}}Q_{i}(t)P_{i}(t) dt \geq 32C_{0}+a_{i}16C_{0}\epsilon_{0}.
\end{equation}
On the other hand, 
\begin{align*}
{Q}_{i}\int_{t_{i0}}^{t_{i}}P_{i}(t)dt &\leq 2\int_{t_{i0}}^{t_{i}}Q_{i}(t)P_{i}(t)dt\\
&\leq 2\int_{0}^{1}Q_{i}(t)P_{i}(t)dt\\
&\leq 2\frac{{Q}_{i}-32C_{0}}{a_{i}},
\end{align*} hence
$$\int_{t_{i0}}^{t_{i}}P_{i}(t)dt \leq \frac{2}{a_{i}}\frac{{Q}_{i}-32C_{0}}{{Q_{i}}}\rightarrow 0 ,$$
the last limit follows from (\ref{subRmRc2}) and $a_{i}\rightarrow \infty$. This is in contradiction with (\ref{subRmRc1}), so the lemma follows.
\end{proof}


We are now in the position to state our mean value inequality. 

\begin{proposition}
\label{RmRc}
Let $(M,g(t))$, $0\leq t<T$, be a Ricci flow solution on $M$ and $Q(t)=\sup_{M\times\{t\}}|\text{Rm}|$. There exist constants 
\begin{align*}
C_{0}&=C_{0}(n,\kappa,Q(0)),\\
C_{1}&=32Q_{0},
\end{align*}
such that
\begin{equation}
\sup_{[0,t]}Q\leq C_{0}\int_{0}^{t}Q(u)P(u)du+C_{1}.
\end{equation} 
\end{proposition}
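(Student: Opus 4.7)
Proposition \ref{RmRc} is the global-time version of Lemma \ref{subRmRc}, and my plan is to run the point-picking mechanism inside the proof of Lemma \ref{subRmRc} directly on $[0,t]$, without rescaling or a separate compactness argument. The ingredients I need are Perelman's non-collapsing theorem (producing $\kappa = \kappa(g(0), T, n) > 0$ with $(M, g(u))$ $\kappa$-noncollapsed on all scales throughout $[0,T)$), the doubling-time estimate (Lemma \ref{doublingtime}), and the claim embedded in the proof of Lemma \ref{subRmRc}, which is the rescaled form of \cite[Lemma 3.2]{wangb12}.

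First I would dispose of the trivial case: if $\sup_{[0,t]} Q \le 32 Q_0$, the inequality holds with $C_1 = 32 Q_0$ absorbing everything. Otherwise set $Q_* := \sup_{[0,t]} Q > 32 Q_0$ and pick $(x_*, \tau_*)$ with $\tau_* \in (0,t]$ at which $Q_*$ is attained. Since $Q(0) \le Q_0 < Q_*/2$, continuity produces a largest $t_0 \in (0, \tau_*)$ with $Q(t_0) = Q_*/2$, and by maximality $Q(u) \ge Q_*/2$ for every $u \in [t_0, \tau_*]$. Lemma \ref{doublingtime} applied from time $0$ forces $t_0 \ge 1/(16 Q_0)$, since otherwise $Q(t_0) \le 2 Q_0 < Q_*/2$. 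Consequently $1/t_0 \le 16 Q_0 < Q_*/2$.

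With $D = Q_*/2$ the pair $(t_0, D)$ is in the regime where the Wang-type claim applies: $D$ dominates $1/t_0$ by the previous paragraph, and by construction $Q$ ascends from $D$ at time $t_0$ to $2D = Q_*$ at time $\tau_*$. The claim then yields
\begin{equation*}
\int_{t_0}^{\tau_*} P(u)\, du > \epsilon_0
\end{equation*}
for some $\epsilon_0 = \epsilon_0(n, \kappa) > 0$. Combined with $Q(u) \ge Q_*/2$ on $[t_0, \tau_*]$,
\begin{equation*}
\int_0^t Q(u) P(u)\, du \;\ge\; \frac{Q_*}{2} \int_{t_0}^{\tau_*} P(u)\, du \;>\; \frac{Q_* \epsilon_0}{2},
\end{equation*}
so $Q_* < (2/\epsilon_0) \int_0^t Q(u) P(u)\, du$. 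Setting $C_0 = 2/\epsilon_0(n, \kappa)$ and $C_1 = 32 Q_0$ then handles both cases.

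The only subtle step is verifying $D \ge 1/t_0$, which is precisely where the doubling-time estimate combined with the assumption $Q_* > 32 Q_0$ is essential; the rest is straightforward once Perelman's non-collapsing is invoked to feed the claim.
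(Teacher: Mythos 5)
Your argument is, at bottom, the same mechanism the paper uses --- the doubling-time estimate (Lemma \ref{doublingtime}), the Wang-type claim inside Lemma \ref{subRmRc}, and point-picking at the space-time maximum of $|\Rm|$ --- but you run it directly on $[0,t]$, whereas the paper first proves a unit-interval statement (Lemma \ref{subRmRc}) by a contradiction/sequence argument and then deduces Proposition \ref{RmRc} via the parabolic rescaling $\tilde g(s)=\frac1t g(ts)$. Your direct route is a genuine simplification and buys something concrete: an explicit constant $C_0=2/\epsilon_0(n,\kappa)$, which in particular shows $C_0$ need not depend on $Q(0)$ (only $C_1=32Q_0$ does), and it sidesteps a small awkwardness in the paper's reduction, namely that the rescaled initial bound $\tilde Q(0)=tQ(0)$ varies with $t$, so the constant supplied by Lemma \ref{subRmRc} would a priori depend on $TQ(0)$. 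Making the appeal to Perelman's non-collapsing explicit to produce $\kappa$ on $[0,T)$ is also the right move, since the proposition's statement suppresses that hypothesis.

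One caveat, which your write-up shares with the paper's own proof of Lemma \ref{subRmRc}: the claim requires $D\ge\max\{1/t_0,\,\max_{[0,t_0]}Q\}$, not only $D\ge 1/t_0$, so your remark that ``the only subtle step is verifying $D\ge 1/t_0$'' is not quite right. With $t_0$ chosen as the \emph{last} time before $\tau_*$ at which $Q=Q_*/2$, the bound $\max_{[0,t_0]}Q\le Q_*/2$ can fail (the curvature may have spiked above $Q_*/2$ earlier and returned), and moreover, since $Q>D$ on $(t_0,\tau_*]$, the claim's $t_1$ (first time after the reference time with $Q=D$) is not literally your $t_0$. Both points are repaired by taking instead $t_1$ to be the \emph{first} time $Q$ reaches $Q_*/2$, with reference time chosen in $[2/Q_*,\,t_1)$ (admissible because $t_1\ge 1/(16Q_0)>2/Q_*$ and $Q<Q_*/2$ before $t_1$), and $t_2$ the first subsequent time at which $Q$ changes by a factor of $2$; then $t_2\le\tau_*\le t$, $Q\ge Q_*/4$ on $[t_1,t_2]$, and the same computation yields the proposition with $C_0=4/\epsilon_0$.
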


\begin{proof}
For $t\in[0,\frac{1}{16Q(0)})$ the statement is true by Lemma \ref{doublingtime}. For any $t\in [\frac{1}{16Q(0)},T)$ define 
\begin{align*}
\tilde{g}(s) &=\frac{1}{t}g(ts), \text{ $s\in[0,1]$},\\
\tilde{Q}(0) &=tQ(0).
\end{align*}
Since the noncollapsing constant is a scaling invariant, applying Lemma \ref{subRmRc} yields
\begin{align*}
\sup_{[0,1]}\tilde{Q} &\leq C_{0}\int_{0}^{1}\tilde{Q}(s)\tilde{P}(s)ds+32tQ_{0},\\
\sup_{[0,t]}tQ &\leq C_{0}t\int_{0}^{t}Q(u)P(u)du+32tQ(0) \text{ ($u=ts$)}, \\
\sup_{[0,t]}Q &\leq C_{0}\int_{0}^{t}Q(u)^{2}du+32Q(0).
\end{align*}
\end{proof}

Now we can finish the proof of Theorem \ref{mt1}.
\begin{proof}({\bf Theorem \ref{mt1}}) First observe that if T is the first singular time then 
\[\lim_{t\rightarrow T} Q(t)=\infty \]
by \cite{H3}. Now applying Lemma \ref{namtrick} with the function $\psi(s)=s\ln(1+s)^{p}$, $0\leq p\leq 1$ (it is easy to check that it is nondecreasing and $\int_{1}^{\infty}\frac{1}{\psi(s)}ds=\infty$) and Proposition \ref{RmRc} yields the result.
\end{proof}

\section{Time Slice Approach}

In the last section, the essential ingredient to obtain the mean value inequality relating $Q(t)$ and $P(t)$ is the estimate in Lemma \ref{subRmRc}. That estimate essentially states that when the curvature double the integral of the maximum norm of the Ricci tensor be bounded below by some universal constant. It turns out that using the time slice analysis, we can deduct similar results in a slightly different manner. To be more precise, the logarithmic quantity and $ln(\int_{0}^{T}P(t)dt)$ blow up together at the first singular time.  We shall also derive some other results which might be of independent interest. 

First let's fix our notation. For a Ricci flow solution developing a finite time singularity, let $s_{i}$ be the first time such that $Q(s_{i})=2^{i+4}Q(0)$. Then by Lemma \ref{doublingtime}, 
\begin{equation}
\label{doubletime}
s_{i+1}\geq s_{i}+\frac{1}{16Q(s_{i})}=s_{i}+\frac{1}{8Q(s_{i+1})}.
\end{equation}

\begin{lemma}
\label{expRc}
Let $(M,g(t))$, $t\in[0,T)$, be a maximal $\kappa$-noncollapsed Ricci flow solution on  $M$ . Then 
\begin{equation}
\sup_{[0,t]}Q(s) \leq 2^{\frac{1}{\epsilon_{0}}\int_{0}^{t}P(s)ds+1}16Q(0),
\end{equation}
where $\epsilon_{0}$ is the constant from the claim of Lemma \ref{subRmRc}.
\end{lemma}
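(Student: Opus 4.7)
The plan is to reduce the problem to the claim established inside the proof of Lemma \ref{subRmRc}, which asserts that between any ``doubling'' of $Q$ the integral of $P$ exceeds a universal constant $\epsilon_{0}$. The sequence $\{s_{i}\}$ defined just before the lemma is tailor-made for this: $Q(s_{i})=2^{i+4}Q(0)$, so passing from $s_{i}$ to $s_{i+1}$ is exactly a doubling event. First I would invoke Lemma \ref{doublingtime} to get $s_{0}\geq \frac{1}{16Q(0)}$, hence $1/s_{j}\leq 16Q(0)\leq 2^{j+4}Q(0)$ for every $j\geq 0$. This is precisely the auxiliary hypothesis $D\geq 1/t_{0}$ needed when applying the claim at scale $D=2^{j+4}Q(0)$ with $t_{0}$ chosen slightly to the left of $s_{j}$.

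Applying the claim for each $j\geq 0$ with $t_{0}\nearrow s_{j}$ and $D=2^{j+4}Q(0)$, the corresponding $t_{1}$ is forced to equal $s_{j}$ (first arrival of $Q$ at $D$), and by the defining property of $s_{j+1}$ the output $t_{2}$ satisfies $t_{2}\leq s_{j+1}$. Consequently
\begin{equation*}
\int_{s_{j}}^{s_{j+1}} P(s)\,ds \;\geq\; \int_{t_{1}}^{t_{2}} P(s)\,ds \;>\; \epsilon_{0}.
\end{equation*}

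Now fix $t\in[0,T)$. If $t<s_{0}$, then $\sup_{[0,t]}Q\leq 2Q(0)\leq 16Q(0)$ directly from Lemma \ref{doublingtime}, and the desired inequality holds trivially. Otherwise let $i\geq 0$ be the largest index with $s_{i}\leq t$; summing the previous estimate over $j=0,\dots,i-1$ yields
\begin{equation*}
\int_{0}^{t} P(s)\,ds \;\geq\; i\,\epsilon_{0}, \qquad \text{so} \qquad i \;\leq\; \tfrac{1}{\epsilon_{0}}\int_{0}^{t} P(s)\,ds.
\end{equation*}
Since $t<s_{i+1}$, the definition of $s_{i+1}$ together with the monotonicity of the running supremum gives $\sup_{[0,t]}Q < 2^{i+5}Q(0)=2^{i+1}\cdot 16Q(0)$. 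Combining with the bound on $i$ yields the stated inequality.

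The only subtlety I anticipate is the strict inequality $t_{1}>t_{0}$ in the claim from Lemma \ref{subRmRc}: at $t_{0}=s_{j}$ one already has $Q(t_{0})=D$, so the ``first time $>t_{0}$'' interpretation is ambiguous. I handle this by choosing $t_{0}$ just to the left of $s_{j}$, which preserves the hypothesis $\max_{[0,t_{0}]}Q<D$ (because $s_{j}$ is the \emph{first} time $Q$ reaches $D$) while forcing $t_{1}=s_{j}$. Modulo this bookkeeping the argument is a straightforward geometric-series count, and the $\kappa$-noncollapsing hypothesis is used only implicitly via the claim.
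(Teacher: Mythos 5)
Your argument is correct and is essentially the paper's own proof: count the doubling times $s_{j}$ reached by time $t$, apply the claim from Lemma \ref{subRmRc} to get $\int_{s_{j}}^{s_{j+1}}P\,ds\geq\epsilon_{0}$ on each doubling interval, and conclude $\sup_{[0,t]}Q\leq 2^{N+1}\cdot 16Q(0)$ with $N\leq\frac{1}{\epsilon_{0}}\int_{0}^{t}P\,ds$. The extra bookkeeping you supply (verifying $D\geq 1/t_{0}$ and $D\geq\max_{[0,t_{0}]}Q$ via Lemma \ref{doublingtime}, and taking $t_{0}$ just below $s_{j}$) only makes explicit steps the paper leaves implicit.
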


\begin{proof}
The result can be deducted directly from \cite[Theorem 3.1]{wangb12}. For completeness, we provide a proof here. 
From the claim in Lemma \ref{subRmRc},  we have
$$\int_{s_{i}}^{s_{i+1}}P(t)dt\geq \epsilon_{0}.$$
Let N be the largest interger such that $s_{N}\leq t$ then 
$$N\epsilon_{0}  \leq \int_{s_{0}}^{s_{N}}P(s)ds\leq \int_{0}^{t}P(s)ds,$$ hence
$$N \leq \frac{1}{\epsilon_{0}}\int_{0}^{t}P(s)ds.$$
Thus it follows that \[\sup_{[0,t]}Q(s)\leq 2^{N+1}16Q(0)\leq 2^{\frac{1}{\epsilon_{0}}\int_{0}^{t}P(s)ds+1}16Q(0).\]
\end{proof}

Next we derive a mean value type inequality using the time slice argument.

\begin{theorem}
\label{sec3th1}
Let $(M,g(t))$, $t\in[0,T)$, be a maximal $\kappa$-noncollapsed Ricci flow solution on  $M$. Furthermore, let  \[G(u)=\ln(16Q(0))+2\ln{2}+\frac{\ln{2}}{\epsilon_{0}}\int_{0}^{u}P(s)ds.\] 
Then for $0\leq p\leq 1$, we have
\begin{equation}
\ln(G(t)) \leq C_1 \int_{0}^{t}\frac{P(s)}{(\ln(1+Q(s)))^{p}}ds+C_{2},
\end{equation}
where $C_{1}>0$ only depends on $\epsilon_{0}$, $C_{2}>0$ depends on $\epsilon_{0}$ and $Q(0)$.
\end{theorem}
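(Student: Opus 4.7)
The plan is to turn the exponential curvature bound of Lemma~\ref{expRc} into a pointwise lower bound for $G(t)$ in terms of $\ln(1+Q(t))$, and then to integrate a resulting differential inequality for $\ln G(t)$. I would begin from the identity $G'(t)=\frac{\ln 2}{\epsilon_{0}}P(t)$, which gives
\[\frac{d}{dt}\ln G(t)=\frac{\ln 2}{\epsilon_{0}}\,\frac{P(t)}{G(t)},\]
reducing the theorem to establishing the pointwise comparison $(\ln(1+Q(t)))^{p}\leq G(t)$, up to absorbed boundary terms.

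The key step is the observation that taking the natural logarithm of the inequality in Lemma~\ref{expRc} yields exactly $\ln\sup_{[0,t]}Q\leq G(t)-\ln 2$; equivalently $G(t)\geq\ln(2\sup_{[0,t]}Q)\geq\ln(1+Q(t))$ whenever $\sup_{[0,t]}Q\geq 1$. For $p\in[0,1]$, the elementary estimate $(\ln(1+Q))^{p}\leq\max(1,\ln(1+Q))$ then upgrades this to $(\ln(1+Q(t)))^{p}\leq G(t)$ as soon as $G(t)\geq 1$. Substituting into the formula for $\frac{d}{dt}\ln G(t)$ and integrating from a time $\tau$ past which these two hypotheses hold would yield
\[\ln G(t)\leq \frac{\ln 2}{\epsilon_{0}}\int_{\tau}^{t}\frac{P(s)}{(\ln(1+Q(s)))^{p}}\,ds,\]
which is the asserted inequality with $C_{1}=\ln 2/\epsilon_{0}$.

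I expect the main obstacle to be the initial low-curvature regime $[0,\tau]$ where the pointwise comparison can fail (for instance if $Q(0)$ is small, then $G(0)=\ln(64Q(0))$ can be less than $1$). Since the flow is maximal with $Q(t)\to\infty$ as $t\to T$, such a $\tau<T$ exists, and on $[0,\tau]$ both $Q$ and $G$ remain bounded by constants depending only on $Q(0)$ and $\epsilon_{0}$. Using $P\leq c_{n}Q$ together with $\ln(1+Q)\sim Q$ as $Q\to 0^{+}$ and $p\leq 1$, the integrand $P(s)/(\ln(1+Q(s)))^{p}$ also stays bounded on this interval, so its bounded contribution and the bounded value of $\ln G(\tau)$ can both be absorbed into the additive constant $C_{2}$, which then depends only on $\epsilon_{0}$ and $Q(0)$ as stated.
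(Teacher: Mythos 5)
Your argument is essentially the paper's own proof: both rest on Lemma \ref{expRc}, the identity $G'(s)=\tfrac{\ln 2}{\epsilon_{0}}P(s)$, the pointwise comparison $(\ln(1+Q(s)))^{p}\le G(s)$, and integration of $G'/G$, with your explicit treatment of the initial low-curvature interval $[0,\tau]$ being a welcome patch of a step the paper asserts without qualification. One small caveat: when $Q(0)<1$ and the condition $\sup_{[0,t]}Q\ge 1$ is the last to be achieved, your bound for $\ln G(\tau)$ via $\int_{0}^{\tau}P\le c_{n}T$ makes $C_{2}$ depend on $T$ as well, contrary to the stated dependence; to keep $C_{2}=C_{2}(\epsilon_{0},Q(0))$, note that on the set where $Q\le 1$ one has $(\ln(1+Q))^{p}\le 1$, so the integrand already dominates $\tfrac{\epsilon_{0}}{\ln 2}G'$ there and hence $\ln G(\tau)\le G(\tau)\le G(0)+\tfrac{\ln 2}{\epsilon_{0}}\int_{0}^{\tau}\frac{P(s)}{(\ln(1+Q(s)))^{p}}\,ds$ with $G(0)=\ln(64Q(0))\le\ln 64$ in this case.
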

\begin{proof}
First, without loss of generality, let $Q=\sup_{[0,t]}Q(s)>2$ and observe that for $0\leq p\leq 1$, 
$$(\ln(1+Q(s)))^{p}\leq \ln(1+Q(s))\leq \ln(1+Q).$$
Applying Lemma \ref{expRc}, 
\begin{align*}
1+Q&\leq 2^{\frac{1}{\epsilon_{0}}\int_{0}^{t}P(s)ds+2}16Q(0),\\
\ln(1+Q) &\leq \ln(16Q(0))+2\ln{2}+\frac{\ln{2}}{\epsilon_{0}}\int_{0}^{t}P(s)ds.
\end{align*} 
Since $G(u)=\ln(16Q(0))+2\ln{2}+\ln{2}\int_{0}^{u}P(s)ds$, we have 
$$G'(s)=\frac{\ln{2}}{\epsilon_{0}}P(s)>0,$$ 
and $$G(s) \geq (\ln(1+Q(s)))^p.$$ 
Therefore, 
\begin{align*}
\frac{\ln{2}}{\epsilon_{0}} \int_{0}^{t}\frac{P(s)}{(\ln(1+Q(s)))^{p}}ds& \geq \int_{0}^{t}\frac{G'(s)}{G(s)}ds\\
& =\ln G(t)-\ln G(0).
\end{align*}
The statement now follows immediately.
\end{proof}

\begin{remark}
Theorem \ref{mt1} now follows from Theorem \ref{sec3th1} and the fact that $\int_0^T P(s)ds$ needs to blow up at the first singular time $T$ {\em (for example, see \cite{wangb12} or \cite{FH12})}.
\end{remark}
Next we apply the same method to a slightly different setting.

\begin{lemma}
\label{meanint}
 Let $(M,g(t))$, $t\in[0,T)$, be a maximal $\kappa$-noncollapsed Ricci flow solution on  $M$. Then there exists a constant $C=C(Q(0),\kappa)$, such that 
\begin{equation}
Q(s_{i+1})\leq C\int_{s_{i}}^{s_{i+1}}\int_{M}|\Rm|^{\frac{n}{2}+2}d\mu_{g(s)}ds,
\end{equation}
and thus
\begin{equation}
\frac{1}{C}\leq \int_{s_{i}}^{s_{i+1}}\int_{M}|\Rm|^{\frac{n}{2}+1}d\mu_{g(s)}ds.
\end{equation}
\end{lemma}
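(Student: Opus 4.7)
The strategy is to parabolically rescale at the scale of $Q(s_{i+1})$ centered at a space-time point where $|\Rm|$ achieves the supremum $Q(s_{i+1})$, then use Shi's local derivative estimates together with $\kappa$-noncollapsing to produce a space-time region of \emph{uniform} (i.e.\ rescaling-invariant) size on which $|\tilde\Rm|$ is bounded below, giving a definite lower bound on the rescaled integral of $|\tilde\Rm|^{n/2+2}$. Unwinding the rescaling then converts this definite constant into the factor $Q(s_{i+1})$ in the original flow.

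More concretely, I would pick $(x_i, s_{i+1})$ with $|\Rm|(x_i, s_{i+1}) = Q(s_{i+1})$ and set $\tilde g(t) = Q(s_{i+1})\, g\!\left(s_{i+1} + t/Q(s_{i+1})\right)$ for $t \in [-Q(s_{i+1})s_{i+1}, 0]$. Because $s_{i+1}$ is the \emph{first} time $Q$ reaches $2^{i+5}Q(0)$, we have $|\tilde\Rm| \le 1$ on this whole backward interval, and by the doubling estimate \eqref{doubletime} the backward time extent is at least $1/8$. The global Shi estimate then yields $|\tilde\nabla\tilde\Rm|(\cdot, 0) \le C(n)$, so $|\tilde\Rm| \ge 1/2$ on a ball $B_{\tilde g(0)}(x_i, r_0)$ of rescaling-invariant radius $r_0 = r_0(n)$. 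The evolution equation $\partial_t|\tilde\Rm|^2 \le C(n)|\tilde\Rm|^3 \le C(n)$ then propagates this: on a parabolic neighborhood $B_{\tilde g(0)}(x_i, r_0) \times [-\tau, 0]$ with $\tau = \tau(n)$, we have $|\tilde\Rm| \ge 1/4$ and volumes are comparable in time. Since $|\tilde\Rm| \le 1 \le r_0^{-2}$ on that ball, Perelman's $\kappa$-noncollapsing gives $\mathrm{Vol}_{\tilde g(0)} B_{\tilde g(0)}(x_i, r_0) \ge \kappa r_0^n$. Putting these together yields
\[
\int_{-\tau}^{0}\!\int_{B_{\tilde g(0)}(x_i, r_0)} |\tilde\Rm|^{n/2+2} \, d\tilde\mu \, dt \;\ge\; c(n, \kappa) > 0.
\]
Using $d\tilde\mu\, dt = Q(s_{i+1})^{n/2+1}\, d\mu\, ds$ and $|\tilde\Rm|^{n/2+2} = Q(s_{i+1})^{-n/2-2}|\Rm|^{n/2+2}$, the left side equals $Q(s_{i+1})^{-1} \int\!\int |\Rm|^{n/2+2}\, d\mu\, ds$, and choosing $\tau \le 1/8$ guarantees the unrescaled time interval lies inside $[s_i, s_{i+1}]$ by \eqref{doubletime}. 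This gives the first inequality with $C = c(n,\kappa)^{-1}$.

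The second inequality is then immediate: on $[s_i, s_{i+1}]$ we have $|\Rm| \le Q(s_{i+1})$ by the first-time definition of $s_{i+1}$, so $|\Rm|^{n/2+2} \le Q(s_{i+1})|\Rm|^{n/2+1}$, and substituting into the first estimate and dividing by $Q(s_{i+1})$ yields $1/C \le \int_{s_i}^{s_{i+1}}\!\int_M |\Rm|^{n/2+1} d\mu\, ds$.

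The main technical delicacy is ensuring that the radius $r_0$, the backward time $\tau$, and the ensuing lower bound constant really are invariant under rescaling — that is, that Shi's estimate, the evolution bound on $|\Rm|$, and Perelman's noncollapsing all feed off of the uniform bound $|\tilde\Rm|\le 1$ on a backward interval of length $\ge 1/8$ and produce conclusions depending only on $n$ and $\kappa$. The doubling estimate \eqref{doubletime} is what guarantees that backward time of length $1/8$ is always available in the rescaled picture, so no dependence on $i$ sneaks into $C$; the dependence on $Q(0)$ only appears implicitly through the choice of sequence $\{s_i\}$.
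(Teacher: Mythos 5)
Your argument is correct in outline, but it takes a genuinely different route from the paper. The paper proves the first inequality by contradiction: assuming a violating sequence with constants $a_j\to\infty$, it rescales at the maximum-curvature scale, invokes the Cheeger--Gromov--Hamilton compactness theorem together with noncollapsing to extract a singularity model with $|\Rm_\infty(x_\infty,0)|=1$, and then observes that the rescaled space-time integral of $|\Rm|^{n/2+2}$ over $[-1/8,0]$ tends to zero, so the limit would have to be flat --- a contradiction. You instead argue directly and quantitatively: using that $|\tilde\Rm|\le 1$ on a rescaled backward interval of length at least $1/8$ (which, as you note, follows from the first-time definition of $s_{i+1}$ together with (\ref{doubletime})), Shi's estimates and the scale-invariant $\kappa$-noncollapsing hypothesis produce a parabolic neighborhood of definite size on which $|\tilde\Rm|$ is bounded below, giving an explicit lower bound $c(n,\kappa)$ for the rescaled integral, which unwinds exactly as you compute to the first inequality with an effective constant $C=C(n,\kappa)$ (in fact independent of $Q(0)$, which is stronger than stated). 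Your deduction of the second inequality from the first, via $|\Rm|\le Q(s_{i+1})$ on $[s_i,s_{i+1}]$, is the same as the paper's. The trade-off: your proof avoids the compactness/limit machinery and yields an explicit constant, at the cost of more hands-on estimates; the paper's proof is shorter given the cited machinery, but its constant is non-effective.

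One step as written needs repair: the backward-in-time propagation cannot be justified by ``$\partial_t|\tilde\Rm|^2\le C(n)|\tilde\Rm|^3$'', because the actual evolution is $\partial_t|\tilde\Rm|^2=\Delta|\tilde\Rm|^2-2|\tilde\nabla\tilde\Rm|^2+\Rm\ast\Rm\ast\Rm$, and the Laplacian term cannot be discarded pointwise; it only helps for maximum-principle upper bounds, not for a pointwise lower bound at earlier times. The fix stays within the tools you already invoke: Shi's second-order estimate gives $|\tilde\nabla^2\tilde\Rm|\le C(n)$ for $t\in[-1/16,0]$ (since elapsed time from $-1/8$ is at least $1/16$ there), hence $|\partial_t\tilde\Rm|\le|\tilde\Delta\tilde\Rm|+c(n)|\tilde\Rm|^2\le C(n)$, and integrating in time yields $|\tilde\Rm|\ge 1/4$ on $B_{\tilde g(0)}(x_i,r_0)\times[-\tau,0]$ for a suitable $\tau=\tau(n)\le 1/16$; the accompanying bound $|\tilde\Rc|\le c(n)$ gives the uniform equivalence of the metrics on $[-\tau,0]$ and hence the volume comparability you use. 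With this correction your argument is complete.
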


\begin{proof}
Suppose that the statement is false then as $j\rightarrow \infty$, there exist $s_{i_{j}}\rightarrow T$ and $a_{j}\rightarrow \infty$, such that 
\begin{equation*}
a_{j}\int_{s_{i_{j}}}^{s_{i_{j}+1}}\int_{M}|\Rm|^{n/2+2}d\mu_{g(s)}ds \leq Q(s_{i_{j}+1}).
\end{equation*}
Therefore, we can choose a blow-up sequence $(x_{j},s_{i_{j}}+1)$ (in the sense of \cite[Theorem 8.4]{chowluni}) and rescale the metric by
$$g_{j}(t)=Q(s_{i_{j}+1})g(s_{i_{j}+1}+\frac{t}{Q(s_{i_{j}+1})}).$$  
By the Cheeger-Gromov-Hamilton compactness theorem and Perelman's non-collapsing result (for more details, see \cite[Chapter 8]{chowluni}), we obtain a singularity model $(M_{\infty},g_{\infty}(s),x_{\infty})$ with $|\Rm_{\infty}(x_{\infty},0)|=1.$

On the other hand, 
\begin{align*}
\int_{-1/8}^{0}\int_{M}|\Rm(g_{j}(t))|^{\frac{n}{2}+2}d\mu_{g_{j}(t)}dt&=\frac{1}{Q(s_{i_{j}+1})}\int_{s_{i_{j}+1}-\frac{1}{8Q(s_{i_{j}+1})}}^{s_{i_{j}+1}}\int_{M}|\Rm(g(s)|^{\frac{n}{2}+2}d\mu_{g(s)}ds\\
&\leq \frac{1}{Q(s_{i_{j}+1})}\int_{s_{i_{j}}}^{s_{i_{j}+1}}\int_{M}|\Rm(g(s)|^{\frac{n}{2}+2}d\mu_{g(s)}ds\\
& \leq \frac{1}{a_{j}} \rightarrow 0,
\end{align*}
here (\ref{doubletime}) is used in the first inequality. However, by the dominating convergence theorem, the limit solution is flat, this is a contradiction.

The second statement follows from the first immediately.
\end{proof}
Note that Lemma \ref{meanint} involves a time slice estimate similar in the spirit of the claim in Lemma \ref{subRmRc} and, thus, applying the same method as before yields the following results. The proofs are omitted as they are almost identical to those of Lemma \ref{expRc} and Theorem \ref{sec3th1}.
\begin{proposition}
Let $(M,g(t))$, $t\in[0,T)$, be a maximal $\kappa$-noncollapsed Ricci flow solution on $M$.  Then
\begin{equation}
\sup_{[0,t]}Q(s) \leq 2^{C\int_{0}^{t}\int_{M}|\Rm|^{\frac{n}{2}+1}d\mu_{g(s)}ds+1}16Q(0).
\end{equation}
\end{proposition}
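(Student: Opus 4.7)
The plan is to follow the template of Lemma \ref{expRc} verbatim, replacing the one-sided lower bound $\int_{s_i}^{s_{i+1}} P(s)\,ds \geq \epsilon_{0}$ coming from the claim in Lemma \ref{subRmRc} with the analogous time-slice estimate from Lemma \ref{meanint}, namely
\[
\frac{1}{C} \leq \int_{s_{i}}^{s_{i+1}} \int_{M} |\Rm|^{\frac{n}{2}+1}\, d\mu_{g(s)}\, ds,
\]
where $C = C(Q(0),\kappa)$. This is the only new ingredient needed; everything else is dyadic bookkeeping.

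The key steps I would carry out are as follows. First, recall the sequence $\{s_i\}$ defined at the start of Section 3: $s_i$ is the first time such that $Q(s_i) = 2^{i+4} Q(0)$. Fix $t \in [0, T)$ and let $N$ be the largest integer with $s_N \leq t$ (if no such $s_0$ exists, then $\sup_{[0,t]} Q \leq 16 Q(0)$ by definition and the claim is trivial). Summing the Lemma \ref{meanint} estimate over $i = 0, 1, \dots, N-1$ and using that the intervals $[s_i, s_{i+1}]$ are disjoint subsets of $[0, t]$, I obtain
\[
\frac{N}{C} \leq \sum_{i=0}^{N-1} \int_{s_{i}}^{s_{i+1}} \int_{M} |\Rm|^{\frac{n}{2}+1} d\mu_{g(s)}\, ds \leq \int_{0}^{t}\int_{M} |\Rm|^{\frac{n}{2}+1} d\mu_{g(s)}\, ds,
\]
so $N \leq C \int_{0}^{t} \int_{M} |\Rm|^{\frac{n}{2}+1} d\mu_{g(s)}\, ds$.

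Finally, since $t < s_{N+1}$ and $Q(s_{N+1}) = 2^{(N+1)+4} Q(0) = 16 \cdot 2^{N+1} Q(0)$, continuity of $Q$ combined with the definition of $s_{N+1}$ as the first time $Q$ reaches this value yields
\[
\sup_{[0,t]} Q(s) \leq 16 \cdot 2^{N+1} Q(0) \leq 2^{\, C \int_{0}^{t} \int_{M} |\Rm|^{\frac{n}{2}+1} d\mu_{g(s)}\, ds \, + 1}\, 16 Q(0),
\]
which is exactly the desired estimate after renaming the constant $C$.

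There is essentially no obstacle here: the substantive analytic content (the blow-up and compactness argument producing a flat singularity model) was already used to prove Lemma \ref{meanint}, so this proposition is a mechanical dyadic summation. The only mild subtlety is the bookkeeping at the endpoints—handling the case where $t$ lies before $s_0$, and absorbing the Lemma \ref{meanint} constant $C(Q(0),\kappa)$ into the single exponent constant $C$ in the final inequality—but neither requires any new idea.
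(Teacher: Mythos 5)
Your proposal is correct and is exactly the argument the paper intends: the paper omits the proof, stating it is "almost identical" to that of Lemma \ref{expRc}, and your substitution of the time-slice lower bound from Lemma \ref{meanint} for the $\epsilon_{0}$-bound, followed by the same dyadic counting of the $s_i$'s, is precisely that template.
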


\begin{theorem}
\label{sec3th2}
Let $(M,g(t))$, $t\in[0,T)$, be a maximal $\kappa$-noncollapsed Ricci flow solution on $M$.  Let
\[G(u)=\ln(16Q(0))+2\ln{2}+C\ln{2}\int_{0}^{u}\int_{M}|\Rm|^{\frac{n}{2}+1}d\mu_{g(s)}ds.\]
Then for $0\leq p\leq 1$, we have 
\begin{equation}
\ln(G(t)) \leq C_1 \int_{0}^{t}\int_{M}\frac{|\Rm|^{\frac{n}{2}+1}}{(\ln(1+\Rm))^{p}}d\mu_{g(s)}ds+C_{2},
\end{equation}
where $C_{1}>0$ and $C_{2}$ only depend on  $\kappa$ and $Q(0)$.
\end{theorem}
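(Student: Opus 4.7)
The plan is to carry over the argument of Theorem \ref{sec3th1} almost verbatim, with the preceding Proposition playing the role of Lemma \ref{expRc}. I will focus on the two places that require attention: setting up the logarithmic differential inequality for $G$, and transforming the sup-level logarithmic factor $(\ln(1+Q(s)))^{p}$ into the pointwise factor $(\ln(1+|\Rm|))^{p}$ demanded by the statement.

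As in the earlier proof I may assume without loss of generality that $Q=\sup_{[0,t]}Q(s)>2$. The preceding Proposition then gives
\[
1+Q \;\le\; 2^{C\int_{0}^{t}\int_{M}|\Rm|^{n/2+1} d\mu_{g(s)} ds+2}\cdot 16\,Q(0),
\]
and taking logarithms yields $\ln(1+Q(s))\le G(s)$ for every $s\in[0,t]$; the additive constants $\ln(16Q(0))+2\ln 2$ in the definition of $G$ also guarantee $G(0)\ge 1$, so that $G(s)\ge \ln(1+Q(s))\ge (\ln(1+Q(s)))^{p}$ for all $p\in[0,1]$. Differentiating $G$ then gives
\[
\frac{G'(s)}{G(s)} \;\le\; \frac{C\ln 2}{(\ln(1+Q(s)))^{p}}\int_{M}|\Rm|^{n/2+1} d\mu_{g(s)}.
\]

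The only genuinely new ingredient is the passage from the sup-level factor to the pointwise one, and this is just monotonicity of the logarithm: since $|\Rm|(x,s)\le Q(s)$ pointwise on $M$, we have $(\ln(1+|\Rm|(x,s)))^{p}\le (\ln(1+Q(s)))^{p}$, hence
\[
\frac{1}{(\ln(1+Q(s)))^{p}}\int_{M}|\Rm|^{n/2+1} d\mu_{g(s)} \;\le\; \int_{M}\frac{|\Rm|^{n/2+1}}{(\ln(1+|\Rm|))^{p}} d\mu_{g(s)}.
\]
Combining the last two displays and integrating from $0$ to $t$ turns the left-hand side into $\ln G(t)-\ln G(0)$; absorbing $\ln G(0)$ into $C_{2}$ and setting $C_{1}=C\ln 2$ yields the claimed bound, with both constants depending only on $\kappa$ and $Q(0)$ through the constant $C$ of the preceding Proposition.

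I do not anticipate any real obstacle here: the nontrivial work is already done in Lemma \ref{meanint} and the preceding Proposition, and what remains is exactly the same logarithmic-derivative bookkeeping as in Theorem \ref{sec3th1}, plus one application of pointwise monotonicity to move the $\log$ inside the spatial integral.
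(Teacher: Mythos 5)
Your proposal is correct and is exactly the route the paper intends: the paper explicitly omits this proof as ``almost identical'' to those of Lemma \ref{expRc} and Theorem \ref{sec3th1}, and you reproduce that logarithmic-derivative argument with the preceding Proposition in place of Lemma \ref{expRc}. Your one added observation --- using $|\Rm|(x,s)\le Q(s)$ to replace the factor $(\ln(1+Q(s)))^{-p}$ by the pointwise $(\ln(1+|\Rm|))^{-p}$ inside the spatial integral --- is precisely the small extra step needed, and you handle it correctly.
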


\begin{remark} It is shown in \cite{wangb08} that the function $G(t)$ must blows up as $t$ approaching the first singular time. Therefore, Theorem \ref{sec3th2} implies \cite[Theorem 1.6]{FH12}.
\end{remark}

\section{Nonnegative Isotropic Curvature Condition}

The notion of nonnegative isotropic curvature was first introduced by M. Micallef and J. D. Moore in \cite{mm88}. A Riemannian manifold M of dimension $n\geq 4$ is said to have nonnegative isotropic curvature if for every orthonormal $4$-frame $\{e_1, e_2, e_3, e_4\}$, that
\[\RR_{1313}+\RR_{1414}+\RR_{2323}+\RR_{2424}-2\RR_{1234}\geq 0.\]
The positive condition is defined similarly by replacing the above   with a strict inequality. The isotropic curvature is also related to  complex sectional curvatures  described as follows. For each $p\in M$, let $T_{p}^{C}M=T_{p}M\otimes_{\mathbb{R}} \mathbb{C}$, then the Riemannian metric g extends naturally to a complex bilinear form $$g:T_{p}^{C}M \times T_{p}^{C}M \rightarrow \mathbb{C},$$ and so is the Riemannian curvature tensor  $\Rm$ to a complex multilinear form $$\Rm: T_{p}^{C}M \times T_{p}^{C}M \times T_{p}^{C}M \times T_{p}^{C}M\rightarrow \mathbb{C}.$$ Then $M$ has nonnegative isotropic curvature if and only if, 
 \[\Rm(\theta, \eta,\overline{\theta},\overline{\eta})\geq 0\]
for all  (complex) vectors $\theta$, $\eta$ satisfying $g(\theta,\theta)=g(\eta,\eta)=g(\theta,\eta)=0$ (such a plane spanned by $\theta$ and $\eta$ is called an isotropic plane, for more details, see \cite{brendlebook10}). Furthermore, this nonnegative isotropic curvature condition is implied by several other commonly used curvature
conditions, such as  nonnegative curvature operator or point-wise
$\frac14$-pinched sectional curvature conditions, and it implies nonnegative scalar curvature. For more details, please check, for example,  \cite{mm88} or \cite{brendlebook10}. \\

 Another interesting and relevant fact is that this condition is preserved along the Ricci flow. In dimension $4$, it was proved by Hamilton \cite{HPIC}; higher dimension analog was extended by S. Brendle and R. Schoen \cite{bs091} and
also by H. Nguyen \cite{nguyen10} independently. Using minimal surface technique,  Micallef and Moore \cite{mm88} showed that
 any compact, simply connected manifold with
 positive isotropic curvature is homeomorphic to
 $S\sp n$. By utilizing the Ricci flow and the aforementioned perseverance, Brendle and
Schoen further proved the Differentiable Sphere theorem, which has
been a long time conjecture since the (topological)
$\frac14$-pinched Sphere theorem was proved by M. Berger
\cite{berger60} and W. Klingenberg \cite{kl61} around 60's. More
precisely, Brendle and Schoen showed that any compact Riemannian
manifold with point-wise $\frac14$-pinchied
sectional curvature is diffeomorphic to a spherical space form \cite{bs091}.\\

In this section, we   apply our analysis to the context of non-flat manifolds with nonnegative isotropic curvature or, slightly more general, satisfying the uniform-growth assumption as in Definition \ref{unigrowth}. Let's  first recall the definition of flag curvature and Berger's Lemma.

\begin{definition} Given a unit vector e, the flag curvature on the direction e is a symmetric bilinear form on $V_{e}=e^{\perp}$ (the perpendicular compliment of e in $V=R^{n}$) given by $\RR_{e}(X,X)=\Rm({e},X,{e},X)$ for any $X \in V_{e}$.
\end{definition}

We further define $\rho_{e}=\sup_{|X|=|Y|=1,<X,Y>=0} {(R_{e}(X,X)-R_{e}(Y,Y))}$ and $\rho=\sup_{e}{\rho_{e}}$.

\begin{remark} It is clear that $\rho$ is no more than the difference between the maximum and minimum of sectional curvatures at each point.
\end{remark}
\begin{lemma}[Berger \cite{berger60b}]
\label{Berger} 
For orthonormal vectors U, V, X, W in $T_{p}M$, we have\\
{\bf a}{\em)} $|\Rm(U,V,U,W)|\leq \frac{1}{2}\rho_{U}$,\\
{\bf b}{\em)} $|\Rm(U,V,X,W)|\leq \frac {1}{6}\rho_{U+X}+\frac{1}{6}\rho_{U-X}+ \frac {1}{6}\rho_{U+W}+\frac {1}{6}\rho_{U-W}\leq \frac{2}{3}\rho$.\\
\end{lemma}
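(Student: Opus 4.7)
The plan is to prove (a) by a polarization identity for the symmetric bilinear form $R_U$ on $U^{\perp}$, and then bootstrap to (b) by applying that bound twice after two further polarizations in the first-slot direction, combining everything with the first Bianchi identity. For (a), since $V, W$ are orthonormal and orthogonal to $U$, the vectors $V_{\pm} = (V \pm W)/\sqrt{2}$ are also orthonormal in $U^{\perp}$. The polarization identity for the symmetric bilinear form $R_{U}$ gives
\[
R_{U}(V,W) \;=\; \tfrac{1}{2}\bigl(R_{U}(V_{+},V_{+}) - R_{U}(V_{-},V_{-})\bigr),
\]
and the definition of $\rho_{U}$ immediately bounds the right-hand side in absolute value by $\tfrac{1}{2}\rho_{U}$.

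For (b), the plan is to introduce the three curvature components $a = \Rm(U,V,X,W)$, $b = \Rm(U,X,V,W)$, $c = \Rm(U,W,X,V)$. The first Bianchi identity $\Rm(U,V,X,W) + \Rm(U,X,W,V) + \Rm(U,W,V,X) = 0$, together with antisymmetry in the last two slots, delivers the algebraic relation $a = b + c$. Next I polarize in the $X$-direction: expand $\Rm(U \pm X, V, U \pm X, W)$ by bilinearity, subtract, and use the pair symmetry $\Rm(X,V,U,W) = \Rm(U,W,X,V) = c$ to obtain
\[
a + c \;=\; \tfrac{1}{2}\bigl(\Rm(U+X,V,U+X,W) - \Rm(U-X,V,U-X,W)\bigr).
\]
Since $(U \pm X)/\sqrt{2}$ are unit vectors orthogonal to the orthonormal pair $V,W$, part (a) applied to these unit vectors (and the quadratic rescaling that pulls out a factor of $2$) gives $|a+c| \le \tfrac{1}{2}(\rho_{U+X} + \rho_{U-X})$, adopting the convention $\rho_{U+X} := \rho_{(U+X)/|U+X|}$.

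An entirely analogous polarization in the $W$-direction, where the antisymmetry $\Rm(U,V,W,X) = -a$ and $\Rm(W,V,U,X) = -b$ appear, yields $|a+b| \le \tfrac{1}{2}(\rho_{U+W} + \rho_{U-W})$. Adding these two inequalities and invoking $a = b + c$ to write $(a+b) + (a+c) = 2a + (b+c) = 3a$ produces the first asserted inequality; the final estimate $\le \tfrac{2}{3}\rho$ is then immediate from $\rho_{\bullet} \le \rho$.

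The main obstacle is nothing conceptual but rather the bookkeeping: every one of the four symmetries of $\Rm$ (two antisymmetries, pair swap, first Bianchi) enters the computation, and a single sign slip collapses the identity $a = b + c$ to $0 = 0$. Once $a,b,c$ are isolated and $a = b+c$ is verified, the remainder of the proof is a two-line linear combination.
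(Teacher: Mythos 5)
Your proof is correct. The paper itself offers no proof of this lemma (it is simply quoted from Berger), and your argument is the classical one: polarization of the flag form $R_U$ for part (a), then the two polarizations in the $X$- and $W$-directions combined with the first Bianchi identity ($a=b+c$) to get $3a=(a+b)+(a+c)$, with the factors working out exactly as you state under the natural normalization $\rho_{U+X}:=\rho_{(U+X)/|U+X|}$ (which is the only sensible reading, since the paper defines $\rho_e$ only for unit $e$). I checked the sign bookkeeping in both polarizations and in the Bianchi step; it is all consistent.
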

The Weitzenb\"{o}ck operator $\FF$ is defined as
$$\FF=\Rc \circ g -2\Rm= \frac{(n-2)\RR}{n(n-1)}g\circ
g+\frac{n-4}{n-2} \EE\circ g -\WW.$$
It is well-known that in dimension four, NIC is equivalent to the nonnegativity of $\FF$ (see, for example,
\cite{mm88, mw93, noronha97}). Furthermore, the space of bi-vectors $\Lambda^{2}$ can be decomposed into $\Lambda^{2}_{+}$ and $\Lambda^{2}_{-}$ by the Hodge operator $\ast $. In particular, the Weyl tensor, considered as an operator on 2-forms,  is structurally represented as
\[\WW=\WW_{+}+\WW_{+}.\] 
As a consequence, $\FF \geq 0$ is equivalent to, for $I_{\pm}$ the identity operators on $\Lambda^{2}_{\pm}$,
\[\frac{\RR}{6}I_{\pm}-W_{\pm} \geq 0.\] 
We need the following lemma.

\begin{lemma} 
\label{picestimate}
Let $(M,g)$ be a manifold with NIC then the followings holds.\\
{\bf a}{\em)} If $n=4$, $ |W| \leq \frac{2}{\sqrt{3}}\RR.$\\
{\bf b}{\em)} If $n>4$,  $ |\Rm|  \leq c(n)\RR.$
\end{lemma}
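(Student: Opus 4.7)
I would handle parts (a) and (b) separately since the structure of NIC differs markedly between dimension four and higher dimensions.

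For part (a), the plan is to exploit the Hodge decomposition $\WW=\WW_{+}+\WW_{-}$ together with the fact recorded above that in dimension four NIC is equivalent to $\FF\geq 0$, which amounts to the pair of operator inequalities $\tfrac{\RR}{6}I_{\pm}-\WW_{\pm}\geq 0$ on the three-dimensional spaces $\Lambda^{2}_{\pm}$. Each $\WW_{\pm}$ is symmetric and traceless, so labelling its eigenvalues $\lambda_{1}^{\pm}\geq\lambda_{2}^{\pm}\geq\lambda_{3}^{\pm}$ I have $\sum_{i}\lambda_{i}^{\pm}=0$ and $\lambda_{1}^{\pm}\leq \RR/6$. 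A short optimization (Lagrange multipliers, or a three-case check) shows that $\sum_{i}(\lambda_{i}^{\pm})^{2}$ is maximized at $(\RR/6,\RR/6,-\RR/3)$, giving the operator-norm bound $|\WW_{\pm}|^{2}_{\mathrm{op}}\leq \RR^{2}/6$. Converting to the $(0,4)$-tensor norm (which introduces an overall factor of $4$ from the antisymmetries in the first and last pairs of slots) then yields $|\WW|^{2}=4|\WW_{+}|^{2}_{\mathrm{op}}+4|\WW_{-}|^{2}_{\mathrm{op}}\leq \tfrac{4}{3}\RR^{2}$, which is the claim.

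For part (b), the plan is a compactness argument inside the finite-dimensional vector space of algebraic curvature tensors at a point. Let $\mathcal{C}$ denote the closed convex cone cut out by the NIC inequalities. The scalar curvature $\RR$ is a continuous linear functional on this space, and one sees $\RR\geq 0$ on $\mathcal{C}$ by symmetrizing the two-sided inequality $\Rm_{1313}+\Rm_{1414}+\Rm_{2323}+\Rm_{2424}\geq 2|\Rm_{1234}|$ over all orthonormal $4$-frames. The decisive step is to show $\RR$ is strictly positive on $\mathcal{C}\setminus\{0\}$. Once this is known, the slice $\{\Rm\in\mathcal{C}\colon \RR(\Rm)=1\}$ is compact, so $|\Rm|$ attains a finite maximum $c(n)$ there, and homogeneity gives $|\Rm|\leq c(n)\RR$ throughout $\mathcal{C}$.

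To establish strict positivity, I would take any $\Rm\in\mathcal{C}$ with $\RR=0$ and argue that every component of $\Rm$ must vanish. Averaging the two-sided NIC inequality over the Grassmannian of orthonormal $4$-frames, combined with the three distinct pairings of a fixed $4$-frame and the first Bianchi identity, should force both each sum $\Rm_{ikik}+\Rm_{ilil}+\Rm_{jkjk}+\Rm_{jljl}$ and each mixed component $\Rm_{ijkl}$ with four distinct indices to be zero; polarizing over orthonormal bases then rules out the remaining components.

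The main obstacle, and where I would spend the bulk of the effort, is precisely this strict positivity statement. The NIC cone is strictly larger than the cone of nonnegative curvature operators and a priori could contain nontrivial tensors of zero scalar curvature, so the implication is not a formal consequence of NIC alone. If the cone argument becomes unwieldy, I would fall back to a direct estimate in the spirit of Lemma~\ref{Berger}: combine NIC inequalities across different $4$-frames to bound each sectional curvature $K_{ij}\leq c(n)\RR$ for a universal constant, deduce $\rho\leq c(n)\RR$, and invoke Lemma~\ref{Berger}(b) to conclude $|\Rm_{ijkl}|\leq c(n)\RR$ on any orthonormal frame, hence $|\Rm|\leq c(n)\RR$ after summing over index quadruples.
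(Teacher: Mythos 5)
Part (a) of your proposal is the paper's argument essentially verbatim: the same $\Lambda^{2}_{\pm}$ decomposition, the same eigenvalue constraints $\sum_i\lambda_i=0$ and $\lambda_i\leq \RR/6$ (hence $\lambda_i\geq -\RR/3$), the same extremal configuration $(\RR/6,\RR/6,-\RR/3)$, and the same factor of $4$ between tensor and operator norms. For part (b), however, your primary route is genuinely different from the paper's. The paper argues quantitatively: summing the NIC inequality over frames gives $(n-4)\RR_{ii}+\RR\geq 0$ and $\RR_{ii}+\RR_{jj}\geq 2\RR_{ijij}$, hence two-sided sectional curvature bounds $-3c_{1}\RR\leq \RR_{ijij}\leq c_{1}\RR$, and then Berger's Lemma controls the mixed components $\RR_{ijik}$ and $\RR_{ijkl}$ — this is exactly the fallback you describe in your last sentence, and it produces an explicit $c(n)$. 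Your compactness argument on the closed NIC cone is also correct, provided the rigidity statement you isolate (NIC and $\RR=0$ force $\Rm=0$ at the algebraic level) is proved; and it is true for $n>4$, by precisely the paper's inequalities: $(n-4)\RR_{ii}+\RR\geq 0$ with $\RR=0$ gives $\RR_{ii}\geq 0$ and hence Ricci $=0$, then $\RR_{ii}+\RR_{jj}\geq 2\RR_{ijij}$ together with $\sum_{j}\RR_{ijij}=\RR_{ii}=0$ kills all sectional curvatures, and polarization kills the remaining components. So the ``decisive step'' you flag is real but short, and it is exactly where the paper's explicit computation reappears; note also that this is where $n>4$ is essential, since in dimension four the NIC condition involves only $\WW_{\pm}$ and $\RR$ (cf. the equivalence with $\FF\geq 0$ recalled in the paper), so the four-dimensional NIC cone contains nonzero scalar-flat tensors with arbitrary traceless Ricci and strict positivity of $\RR$ on the punctured cone fails — your caution that the NIC cone exceeds the nonnegative-curvature-operator cone is therefore well placed. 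The trade-off between the two routes: the paper's gives an effective constant, while your compactness argument is softer, yields a non-explicit $c(n)$, but adapts immediately to any closed $O(n)$-invariant curvature cone on which the scalar curvature is strictly positive away from the origin.
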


Part b) is well-known to many experts, for example, see \cite{seshadri10} or  \cite[Prop. 7.3]{brendlebook10}. We provide a proof here for the sake of  completeness. 

\begin{proof} {\bf a})
Let $\lambda_{1}\leq \lambda_{2}\leq \lambda_{3}$ be eigenvalues of $W_{+}$ then 
\begin{align*}
|\WW^{+}|^2 &=4\sum_{i=1}^{3}\lambda_{i}^2,\\
0&=\sum_{i=1}^{3}\lambda_{i},\\
-\frac{\RR}{3} &\leq \lambda_{i} \leq \frac{\RR}{6},
\end{align*}
while noticing that, for a $(4,0)$-tensor, the tensor norm is 4 times the operator norm (sum of squared eigenvalues).
We would like to maximize the function $|\WW^{+}|^2 =4\sum_{i=1}^{3}\lambda_{i}^2$ on the region identified by the plane $\sum_{i=1}^{3}\lambda_{i}=0$ bounded by the the box $-\frac{\RR}{3}\leq \lambda_{i} \leq \frac{\RR}{6}$. Since the region is compact, the function attains its maximum. \\
If $\lambda_{1}>-\frac{\RR}{3}$ then we can always increase the function by decreasing $\lambda_{1}$ and increasing either $\lambda_{2}$ or $\lambda_{3}$. Thus $|\WW_{+}|$ attains its maximum when $\lambda_{1}=-\frac{\RR}{3}$ and $\lambda_{2}=\lambda_{3}=\frac{\RR}{6}$. Clearly, the argument holds for $\WW_{-}$ and the result follows.

{\bf b}) If $n>4$, then we have
\begin{align*}
&\RR_{ikik}+\RR_{ilil}+\RR_{jkjk}+\RR_{jljl} \geq 0,\\
&\RR_{ii}+\RR_{jj}\geq 2\RR_{ijij},\\
&(n-4)\RR_{ii}+\RR\geq 0.
\end{align*}
Thus,
\begin{align*}
&\RR_{ii}\geq -\frac{\RR}{n-4},\\
&\RR_{ii}=\RR-\Sigma_{j\neq i}\RR_{jj}\leq \RR+(n-1)\frac{\RR}{n-4}=c_{1}\RR, \\
&\RR_{ijij}\leq \frac{1}{2}(\RR_{ii}+\RR_{jj})\leq c_{1}\RR,\\
&\RR_{ijij}\geq -3c_{1}\RR,
\end{align*}
Now by Lemma \ref{Berger}, 
\begin{align}
|\RR_{ijik}|\leq 2c_{1}\RR,\\
|\RR_{ijkl}|\leq \frac{8}{3}c_{1}\RR. 
\end{align}
Thus, there exists a constant $c(n)$ such that
\begin{equation*}
|\Rm|\leq c(n) |\RR|.
\end{equation*} 
\end{proof}

A direct consequence of the above lemma is the following proposition.

\begin{proposition}Let $(M,g(t))$, $t\in [0,T)$, be a maximal Ricci flow solution with NIC, then there exists $c=c(n,g(0))$ such that $|\text{Rm}|\leq c\RR$ along the flow.
\end{proposition}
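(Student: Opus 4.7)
The plan is to combine Lemma \ref{picestimate} with the preservation of NIC under the Ricci flow (Hamilton \cite{HPIC} in dimension four, Brendle--Schoen \cite{bs091} and Nguyen \cite{nguyen10} independently in higher dimensions). The proof then splits on the dimension, and the dependence on the initial metric is needed only in the borderline case $n=4$.

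When $n>4$ the conclusion is immediate: since $g(t)$ is NIC for every $t\in[0,T)$, Lemma \ref{picestimate}(b) applied pointwise gives $|\Rm|(x,t)\leq c(n)\RR(x,t)$, with $c(n)$ depending only on $n$.

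When $n=4$, Lemma \ref{picestimate}(a) only controls the Weyl part, $|\WW|\leq \tfrac{2}{\sqrt{3}}\RR$. To upgrade this to a bound on the full curvature tensor I would use the orthogonal decomposition
\[
|\Rm|^2 \;=\; |\WW|^2 \;+\; 2|\mathring{\Rc}|^2 \;+\; \tfrac{1}{6}\RR^2,
\]
which reduces the problem to controlling the traceless Ricci $|\mathring{\Rc}|$ by a multiple of $\RR$. Since the flow is non-trivial (it develops a singularity), the strong maximum principle applied to $\RR$ forces $\RR>0$ on $M$ after an arbitrarily small positive time, so the quantity
\[
C_0 \;:=\; \sup_{M\times\{0\}} \frac{|\mathring{\Rc}|^2}{\RR^2}
\]
is finite by compactness (after shifting time slightly if necessary). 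I would then invoke Hamilton's maximum principle for tensors to show that the cone $\{|\mathring{\Rc}|^2 \leq C_0 \RR^2\}\cap \{\text{NIC}\}$ in the space of algebraic curvature tensors is preserved by the ODE reaction $\tfrac{d}{dt}\Rm = Q(\Rm)$ attached to the Ricci flow evolution $\partial_t \Rm = \Delta \Rm + Q(\Rm)$. Combined with the Weyl bound and the decomposition above, this yields $|\Rm|\leq c(n,g(0))\RR$ along the flow.

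The main obstacle is the invariant-cone check when $n=4$: one has to compute $\tfrac{d}{dt}|\mathring{\Rc}|^2$ and $\tfrac{d}{dt}\RR^2$ under the ODE system and verify that on the boundary $|\mathring{\Rc}|^2 = C_0 \RR^2$ inside the NIC cone the reaction vector points strictly into the pinching region, once NIC is used to control the mixed ``$\mathring{\Rc}\cdot \WW$'' cross terms. This is an algebraic but delicate bookkeeping, relying crucially on the characterization of four-dimensional NIC through the nonnegativity of the Weitzenb\"ock operator $\FF$ introduced earlier in the section.
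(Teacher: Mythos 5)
Your $n>4$ case coincides with the paper's proof, and your orthogonal decomposition $|\Rm|^2=|\WW|^2+2|\mathring{\Rc}|^2+\tfrac16\RR^2$ in dimension four is also what the paper uses. But the heart of the four-dimensional case --- bounding $|\mathring{\Rc}|/\RR$ --- is exactly the step you leave unverified, and as formulated it is doubtful. You propose that the cone $\{|\mathring{\Rc}|^2\le C_0\RR^2\}\cap\{\mathrm{NIC}\}$, with $C_0$ the initial ratio, is preserved by the ODE $\tfrac{d}{dt}\Rm=Q(\Rm)$; however, the reaction term for $\mathring{\Rc}$ contains the contraction $\WW_{ikjl}\mathring{\RR}^{kl}$, and there is no known sign argument showing that NIC alone makes these cross terms point into the cone with the \emph{same} constant $C_0$. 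The shape of the estimate that is actually available is telling: the pinching estimate of \cite{cao11}, which is what the paper invokes, reads
\begin{equation*}
\frac{|\mathring{\Rc}|}{\RR}\;\le\; c_{1}(n,g(0))+c_{2}(n)\sup_{M\times[0,T)}\sqrt{\frac{|\WW|}{\RR}},
\end{equation*}
i.e.\ the initial ratio is \emph{not} simply propagated; one pays an additive correction governed by $\sup\sqrt{|\WW|/\RR}$, obtained from a maximum-principle analysis of a quantity coupling $\mathring{\Rc}$, $\RR$ and $\WW$, not from preservation of your naive cone. The paper then closes the loop with Lemma \ref{picestimate}(a): four-dimensional NIC gives $|\WW|\le\tfrac{2}{\sqrt3}\RR$, so the correction term is bounded by $c_2\sqrt{2/\sqrt3}$, and the decomposition yields $|\Rm|\le c(n,g(0))\RR$.

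So the gap is concrete: you have deferred the entire difficulty to an ``algebraic but delicate bookkeeping'' invariant-cone check that is not carried out and whose conclusion, in the strong form you state (fixed constant $C_0$ preserved), is not supported by the existing literature; the mixed $\WW\cdot\mathring{\Rc}$ terms are precisely the obstruction. To repair the argument, replace the cone-preservation step by a citation of (or a reproof of) the pinching estimate of \cite{cao11} and combine it with the Weyl bound of Lemma \ref{picestimate}(a), which is the route the paper takes. A side remark: your use of the strong maximum principle to get $\RR>0$ after a short time is fine for making $|\mathring{\Rc}|^2/\RR^2$ well defined, but it becomes unnecessary once you use the cited estimate.
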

\begin{proof}
If $n>4$ then the result follows from part b) of  Lemma \ref{picestimate}.  \\
If $n=4$, then by the pinching estimate of \cite{cao11},
\begin{equation*}
\frac{|\mathring{\text{Rc}}|}{\RR}\leq c_{1}(n,g(0))+c_{2}(n)\sup_{M \times [0,T)}\sqrt{\frac{|W|}{R}}\leq c_{1}+c_{2}\sqrt{\frac{2}{\sqrt{3}}}.
\end{equation*} 
Furthermore, $|\text{Rm}|^2=|W|^2+\frac{\RR^2}{6}+2|\mathring{\text{Rc}}|^2$, the result follows.
\end{proof}
\begin{remark} One easy consequence is that a non-flat Ricci flow solution on a closed manifold with NIC satisfies the uniform-growth condition as in Definition \ref{unigrowth}.
\end{remark}

\begin{theorem}
\label{integralcond}
Let $(M,g(t))$, $t\in[0,T)$, be a Ricci flow solution satisfying the uniform-growth condition. If either
\[ \int_{M}|\RR|^{\alpha} d\mu_{g(t)}\leq \infty, \text{ for some $\alpha> n/2$},\]
or 
\[\int_{0}^{T}\int_{M}|\RR|^{\alpha} d\mu_{g(t)} dt\leq \infty \text{ for some $\alpha\geq \frac{n}{2}+1$},\]
then the solution can be extended past time T.
\end{theorem}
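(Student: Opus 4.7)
The proof will proceed by contradiction, combining a blow-up/compactness argument (as in the proof of Lemma \ref{meanint}) with the scaling behavior of the integral quantities in the hypotheses, exactly as the uniform-growth condition was designed to enable.

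Assume $T$ is the first singular time. By Hamilton's theorem, $Q(t)\to\infty$ as $t\to T$, so I can pick a blow-up sequence $(x_j,t_j)$ with $t_j\to T$ and $|\Rm|(x_j,t_j)=Q_j=\sup_{M\times[0,t_j]}|\Rm|\to\infty$, and define the rescaled flows
\begin{equation*}
g_j(s)=Q_j\,g\!\left(t_j+\tfrac{s}{Q_j}\right),\qquad s\in[-Q_j t_j,0].
\end{equation*}
Perelman's no local collapsing together with the Cheeger--Gromov--Hamilton compactness theorem (see \cite[Chapter 8]{chowluni}) produces a pointed limit $(M_\infty,g_\infty(s),x_\infty)$ defined on $(-\infty,0]$ with $|\Rm_{g_\infty}|(x_\infty,0)=1$. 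By the uniform-growth condition (Definition \ref{unigrowth}), the scalar curvature $R_{g_\infty}$ is not identically zero on $M_\infty\times(-\infty,0]$.

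Next I compute how the integral hypotheses scale. Under $\tilde g=\lambda g$, $|\tilde R|=\lambda^{-1}|R|$ and $d\tilde\mu=\lambda^{n/2}d\mu$, so
\begin{equation*}
\int_M|R_{g_j(s)}|^{\alpha}\,d\mu_{g_j(s)}=Q_j^{\,n/2-\alpha}\int_M|R_{g(t_j+s/Q_j)}|^{\alpha}\,d\mu_{g(t_j+s/Q_j)}.
\end{equation*}
In the first case $\alpha>n/2$, the uniform spatial bound $\int_M|R|^\alpha d\mu_{g(t)}\le C$ gives $\int_M|R_{g_j(s)}|^\alpha d\mu_{g_j(s)}\le CQ_j^{\,n/2-\alpha}\to 0$ for each fixed $s$. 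In the second case, a similar computation with the extra factor from $ds=Q_j dt$ yields
\begin{equation*}
\int_{-A}^{0}\!\!\int_M|R_{g_j}|^{\alpha}\,d\mu_{g_j}\,ds=Q_j^{\,n/2-\alpha+1}\!\int_{t_j-A/Q_j}^{t_j}\!\!\int_M|R|^{\alpha}\,d\mu_{g}\,dt,
\end{equation*}
which tends to $0$ as $j\to\infty$: if $\alpha>n/2+1$ the prefactor vanishes while the tail integral stays bounded, and if $\alpha=n/2+1$ the prefactor is $1$ but the shrinking integration interval $[t_j-A/Q_j,t_j]$ forces the integral to $0$ by absolute continuity of the finite space-time integral $\int_0^T\!\int_M|R|^\alpha d\mu\,dt$.

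Finally, I pass to the limit. Cheeger--Gromov--Hamilton convergence is smooth on compact subsets of $M_\infty\times(-\infty,0]$, so on any compact $K\subset M_\infty$ and interval $[-A,0]$, the integrals $\int_K|R_{g_j(s)}|^\alpha\,d\mu_{g_j(s)}$ (resp. the space-time analogue) converge to $\int_K|R_{g_\infty(s)}|^\alpha d\mu_{g_\infty(s)}$. Combined with the estimates above, this forces
\begin{equation*}
\int_K|R_{g_\infty(s)}|^{\alpha}\,d\mu_{g_\infty(s)}=0\quad\text{for all compact } K\subset M_\infty \text{ and all } s,
\end{equation*}
(or $\int_{-A}^{0}\!\int_K|R_{g_\infty}|^{\alpha}d\mu_{g_\infty}ds=0$ in the space-time case). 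Either version forces $R_{g_\infty}\equiv 0$ on $M_\infty\times(-\infty,0]$, contradicting the uniform-growth condition. The main technical obstacle is the final passage to the limit, and in particular justifying the convergence of the integrals on noncompact $M_\infty$ given only smooth convergence on compact sets; this is handled by first localizing to compact $K\subset M_\infty$, then observing that the vanishing on every such $K$ is enough to force $R_{g_\infty}\equiv 0$ and contradict the uniform-growth condition.
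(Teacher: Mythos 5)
Your proposal is correct and follows essentially the same route as the paper: a contradiction via parabolic blow-up at points of maximal curvature, the scaling computation showing the rescaled integrals of $|\RR|^{\alpha}$ vanish in the limit, and then concluding the singularity model is scalar flat, contradicting the uniform-growth condition. The only cosmetic differences are that you treat $\alpha>\frac{n}{2}+1$ directly rather than reducing to $\alpha=\frac{n}{2}+1$ by H\"older, and you spell out the localization to compact sets where the paper simply invokes dominated convergence.
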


\begin{proof}

First we observe that, by Holder inequality, for the second condition, it suffices to prove the case $\alpha=\frac{n}{2}+1$.

The proof is by a contradiction argument. Suppose the flow develops a singularity at time $T$ then we can choose a blow-up sequence $(x_{i},t_{i})$ (in the sense of \cite[Theorem 8.4]{chowluni}), and rescale the metric by $g_{i}(s)=Q_{i}g(t_{i}+\frac{s}{Q_{i}})$. By the Cheeger-Gromov-Hamilton compactness theorem and Perelman's non-collapsing result (for more details, see \cite[Chapter 8]{chowluni}), we obtain a singularity model $(M_{\infty},g_{\infty}(s),x_{\infty})$ with
\begin{equation}
\label{nonflatlimit}
|\Rm_{\infty}(x_{\infty},0)|=1.
\end{equation} 
Using the scaling invariant of $\RR$ and the assumptions above we calculate:
\begin{align*}
\int_{M}|\RR(g_{i}(.))|^{\alpha}d\mu_{g_{i}(.)}&=\int_{M}Q_{i}^{-\alpha}|\RR(g(.)|^{\alpha}Q_{i}^{n/2}d\mu_{g(.)}\\
&=Q_{i}^{\frac{n}{2}-\alpha}\int_{M}|\RR|^{\alpha} d\mu_{g(.)}\rightarrow 0 \text{ as $i \rightarrow \infty$}.
\end{align*}
In the second case, we have:
\begin{align*}
\int_{-1}^{0}\int_{M}|\RR(g_{i}(s))|^{\frac{n}{2}+1}d\mu_{g_{i}(s)}ds&=\int_{t_{i}-\frac{1}{Q_{i}}}^{t_{i}}\int_{M}Q_{i}^{-\frac{n}{2}-1}|\RR(g(t)|^{\alpha}Q_{i}^{n/2}d\mu_{g(t)}Q_{i}dt\\
&=\int_{t_{i}-\frac{1}{Q_{i}}}^{t_{i}}\int_{M}|\RR(g(t)|^{\alpha}d\mu_{g(t)}dt \rightarrow 0 \text{ as $i \rightarrow \infty$}.
\end{align*}
By the dominating convergence theorem, the singularity model  $(M_{\infty},g_{\infty}(s),x_{\infty})$ is scalar flat, which is a contradiction to our uniform-growth condition.
\end{proof}

Applying Lemma \ref{namtrick} in this context, we obtain the following lemma. 
\begin{lemma}
\label{picnamtrick}
Let $(M,g(t))$, $t\in[0,T)$,  be a Ricci flow solution satisfying the uniform-growth condition. Suppose $\psi: (0,\infty)\rightarrow (0,\infty)$ is a nondecreasing function such that
\begin{equation}
\int_{1}^{\infty}\frac{1}{\psi(s)}ds=\infty.
\end{equation}
If there is a mean value inequality of the form
\begin{equation}
O(t)\leq \int_{0}^{t}C_{1}\psi(f(s))G(s)ds+C_{2}=h(t),
\end{equation}
and $\int_{0}^{T}G(t)dt<\infty$, then the solution can be extended past time T.
\end{lemma}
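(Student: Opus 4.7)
The proof plan is to combine Lemma \ref{namtrick} with the rescaling argument already deployed in the proof of Theorem \ref{integralcond}. The strategy splits into two steps: first, deduce from the hypothesized mean value inequality that the scalar curvature sup $O(t)$ remains uniformly bounded up to time $T$; second, use the uniform-growth condition to rule out a singularity at time $T$ by a blow-up contradiction.

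First I would apply Lemma \ref{namtrick} directly (interpreting the argument of $\psi$ in the stated mean value inequality as $O(s)$, so that both occurrences refer to the same quantity, matching the template of Lemma \ref{namtrick}). The hypotheses carry over verbatim: $\psi$ is nondecreasing with $\int_{1}^{\infty} ds/\psi(s) = \infty$, the right-hand integrand $G$ satisfies $\int_{0}^{T} G(t)\,dt < \infty$, and $O \leq h$ on $[0,T)$. The conclusion of Lemma \ref{namtrick} then delivers
$$\limsup_{t \to T} O(t) < \infty,$$
so the scalar curvature stays uniformly bounded on $[0,T)$.

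Next I would argue by contradiction, mirroring the blow-up step in the proof of Theorem \ref{integralcond}. Assume the flow cannot be extended past $T$. Then by Hamilton's result $Q(t) \to \infty$, so there is a blow-up sequence $(x_{i},t_{i})$ with $Q_{i} = |\Rm|(x_{i},t_{i}) \to \infty$, and the rescaled sequence $g_{i}(s) = Q_{i}\, g(t_{i} + s/Q_{i})$ subconverges, by Perelman's $\kappa$-noncollapsing and the Cheeger-Gromov-Hamilton compactness theorem, to a singularity model $(M_{\infty}, g_{\infty}, x_{\infty})$ with $|\Rm_{\infty}|(x_{\infty},0) = 1$. Since the scalar curvature scales as $R(g_{i}) = Q_{i}^{-1}\, R(g)$, the bound on $O$ forces
$$\sup_{M \times [-1,0]} |R(g_{i})| \;\leq\; \frac{\sup_{[0,T)} O(t)}{Q_{i}} \;\longrightarrow\; 0.$$
Passing to the limit (smooth convergence on compact sets) gives a scalar-flat singularity model, which directly contradicts the uniform-growth condition of Definition \ref{unigrowth}.

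The argument is essentially bookkeeping: the two ingredients (Lemma \ref{namtrick} and the uniform-growth blow-up) have already been isolated in the paper. The only point worth flagging is conceptual rather than technical: unlike Theorem \ref{mt1}, we do not need $O(t) \to \infty$ as $t \to T$ to be known a priori; the uniform-growth assumption replaces that step, and a finite bound on $O$ alone is enough to force extendibility once one rescales at the scale of $|\Rm|$.
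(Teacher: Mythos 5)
Your proposal is correct and follows essentially the same route as the paper: apply Lemma \ref{namtrick} (with $f=O$) to bound $O(t)$ up to $T$, then use the uniform-growth condition to contradict $Q(t)\to\infty$ at a singular time. The only difference is that you spell out, via the rescaling/blow-up argument of Theorem \ref{integralcond}, the step the paper states in one line as ``the curvature tensor and the scalar curvature blow up together,'' which is a welcome clarification but not a different method.
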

\begin{proof}
 First observe that if T is a first singular time then 
\[\lim_{t\rightarrow T} Q(t)=\infty \]
by \cite{H3}. The uniform-growth condition implies that the curvature tensor and the scalar curvature blow up together. Applying Lemma \ref{namtrick} we obtain a contradiction, hence the result holds. 
\end{proof}

We are ready to state a mean value inequality.

\begin{lemma} 
\label{picmvi}
Let $(M,g(t))$, $t\in[0,T)$, be a maximal Ricci flow solution satisfying the uniform-growth condition. Then the following mean value inequality holds: there exists $C_{1}=C_{1}(n,g(0))$ and  $C_{0}$ such that,
\begin{equation}
\sup_{[0,t]}O(t)\leq C_{0}\int_{0}^{t}\int_{M}|\RR(g(t))|^{n/2+2}d\mu_{g(t)}dt+C_{1}
\end{equation}
for all $t<T$.  
\end{lemma}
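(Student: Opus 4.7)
The plan is to imitate the blow-up/doubling-time argument of Lemma \ref{meanint}, but rescaling so as to produce an integral of $|\RR|^{n/2+2}$ instead of $|\Rm|^{n/2+2}$; the uniform-growth hypothesis replaces the role played by non-flatness of $\Rm$ in that earlier argument.

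The first step is to establish a per-interval bound on the supremum of $|\Rm|$, namely $Q(s_{i+1}) \leq C \int_{s_i}^{s_{i+1}} \int_M |\RR|^{n/2+2} d\mu_{g(s)} ds$, with $C = C(n, g(0), \kappa)$ and $s_i$ the doubling-time sequence from Section 3. Arguing by contradiction, suppose sequences $i_k \to \infty$ and $a_k \to \infty$ violate this. Picking $y_k \in M$ achieving $|\Rm(y_k, s_{i_k+1})| = Q_k := Q(s_{i_k+1})$, form the parabolic rescaling $\tilde g_k(t) = Q_k g(s_{i_k+1} + t/Q_k)$. The gap estimate $s_{i_k+1} - s_{i_k} \geq 1/(8 Q_k)$ guarantees $|\tilde\Rm| \leq 1$ on $[-1/8, 0]$, so Perelman's $\kappa$-noncollapsing plus Hamilton's compactness theorem yields a pointed smooth limit $(M_\infty, g_\infty, y_\infty)$ with $|\Rm_\infty|(y_\infty, 0) = 1$, which is a singularity model at the scale of maximum curvature in the sense of Definition \ref{unigrowth}. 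Scale-invariance of the integrand gives $\int_{-1/8}^0 \int_M |\RR(\tilde g_k)|^{n/2+2} d\mu_{\tilde g_k} dt = Q_k^{-1} \int_{s_{i_k+1}-1/(8Q_k)}^{s_{i_k+1}} \int_M |\RR|^{n/2+2} d\mu ds \leq 1/a_k \to 0$, so dominated convergence on any compact subset of $M_\infty$ forces $\RR_\infty \equiv 0$ on the slab $M_\infty \times [-1/8, 0]$; the evolution equation $\partial_t \RR = \Delta \RR + 2|\Rc|^2$ upgrades this to $\Rc_\infty \equiv 0$ on the slab and hence makes the entire singularity model Ricci-flat, contradicting uniform-growth.

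The second step is to promote the per-interval claim to the desired mean value inequality by a dyadic summation. Since $|\RR| \leq c(n) |\Rm|$ pointwise, $\sup_{[0,t]} O \leq c(n) \sup_{[0,t]} Q$. When $t < s_0$, Lemma \ref{doublingtime} gives $\sup_{[0,t]} Q \leq 16 Q(0)$, furnishing the additive constant $C_1 = 16 c(n) Q(0)$. When $t \geq s_0$, let $m$ be the largest index with $s_{m+1} \leq t$; then $\sup_{[0,t]} Q \leq Q(s_{m+2}) = 2 Q(s_{m+1})$, and the telescoping $Q(s_{m+1}) = 2^{m+5} Q(0) \leq \sum_{j=0}^m 2^{j+5} Q(0) = \sum_{j=0}^m Q(s_{j+1})$ combined with the per-interval claim produces $Q(s_{m+1}) \leq C \int_0^t \int_M |\RR|^{n/2+2} d\mu ds$, yielding the stated inequality with $C_0 = 2 c(n) C$.

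The main obstacle is the contradiction step in the first part: one must produce a Cheeger-Gromov limit that genuinely qualifies as a singularity model under Definition \ref{unigrowth}, and then argue that $\RR_\infty \equiv 0$ on a nontrivial spacetime slab violates the \emph{non-flat scalar curvature} clause of uniform-growth. This is exactly where the hypothesis earns its keep; pointwise vanishing of $\RR_\infty$ at a single $(y_\infty, 0)$ would not be enough, but the scale-invariant integral bound provides uniform vanishing on a slab, which the parabolic equation for $\RR$ then propagates to the entire singularity model.
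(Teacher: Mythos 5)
Your argument is correct, but it takes a genuinely different route from the paper's. The paper proves Lemma \ref{picmvi} in one stroke: assuming the global inequality fails for constants $a_i\to\infty$, it blows up at points where $Q_i=\sup_{[0,t_i]}Q$ is attained, and the violated inequality itself gives $\int_{-1}^{0}\int_M|\RR(g_i)|^{n/2+2}d\mu\,ds\le (c_0Q_i-2c_0Q(0))/(a_iQ_i)\to 0$, so the limit is scalar flat and uniform-growth is contradicted directly --- no per-interval statement and no summation are needed. You instead transplant the structure of Lemma \ref{meanint}: a slice-by-slice estimate $Q(s_{i+1})\le C\int_{s_i}^{s_{i+1}}\int_M|\RR|^{n/2+2}d\mu\,ds$ at the doubling times, proved by the same blow-up, scale-invariance ($Q^{-1}$ scaling of the integrand) and dominated-convergence mechanism with uniform-growth replacing the non-flatness of $\Rm_\infty$, followed by the dyadic summation $2^{m+5}\le\sum_{j=0}^m 2^{j+5}$ to recover the global inequality. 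Your route buys a reusable time-slice estimate in the spirit of Section 3 (it would feed directly into analogues of Lemma \ref{expRc} and Theorem \ref{sec3th2} with $|\RR|$ in place of $|\Rm|$), at the cost of the extra summation and of one point worth making explicit: for each fixed $i$ the per-interval integral is strictly positive (otherwise $\RR\equiv 0$ on $M\times[s_i,s_{i+1}]$ would force the flow to be static there, contradicting $Q(s_{i+1})=2Q(s_i)$), which is what lets you take the violating indices $i_k\to\infty$; the paper's own Lemma \ref{meanint} glosses this in exactly the same way. Two minor corrections: the bound $|\Rm(\tilde g_k)|\le 1$ on the rescaled flows comes from $s_{i_k+1}$ being a first hitting time, so that $Q(s_{i_k+1})=\sup_{[0,s_{i_k+1}]}Q$, not from the gap estimate (\ref{doubletime}), whose only role is to ensure $[s_{i_k+1}-\tfrac{1}{8Q_k},s_{i_k+1}]\subset[s_{i_k},s_{i_k+1}]$; and your final upgrade to ``the entire singularity model is Ricci-flat'' would need $\RR_\infty\ge 0$ plus the strong maximum principle (or backward uniqueness) to propagate past $t=-1/8$, but it is also unnecessary, since scalar flatness of the limit on the final slab is already the contradiction with uniform-growth that the paper itself invokes.
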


\begin{proof}
First we observe that there is a constant $c_{0}(n)$ such that $|\RR|(x,t)\leq c_{0}|\text{Rm}|(x,t)$. Also by Lemma \ref{doublingtime}, if $t\leq \frac{1}{16Q_{0}}$ then 
\begin{equation}
\label{picmvi1}
O(t)\leq c_{0}Q(t)\leq 2c_{0}Q(0). 
\end{equation}
Let \[C_{1}=2c_{0}Q(0).\]
Now suppose the statement is false then there exist sequences $t_{i}\rightarrow T$ and $a_{i}\rightarrow \infty$ such that 
\begin{equation*}
a_{i}\int_{0}^{t_{i}}\int_{M}|\RR|^{n/2+2}d\mu_{g(s)}ds+2c_{0}Q(0)\leq \sup_{[0,t_{i}]}O(t)\leq c_{0} \sup_{[0,t_{i}]}Q(t).
\end{equation*}

Let $Q_{i}=\sup_{[0,t_{i}]}Q(t)$ then there exist $x_{i}$, $\tilde{t}_{i}\rightarrow T$ such that $Q_{i}=|\Rm(x_{i},\tilde{t}_{i})|$. 
Now we can invoke a blow-up argument as in Theorem \ref{integralcond} around these points to obtain a singularity model $(M_{\infty},g_{\infty}(t),x_{\infty})$, $t\in [-\infty,0]$, with $|\Rm_{\infty}(x_{\infty},0)|=1.$ 

On the other hand, we have
\begin{align*}
\int_{-1}^{0}\int_{M}|\RR(g_{i}(s))|^{n/2+2}d\mu_{g_{i}(s)}ds&= \frac{1}{Q_{i}}\int_{\tilde{t}_{i}-\frac{1}{Q_{i}}}^{\tilde{t}_{i}}\int_{M}|\RR(g(t)|^{n/2+2}d\mu_{g(t)}dt\\
&\leq \frac{c_{0} Q_{i}-2c_{0}Q(0)}{a_{i}Q_{i}} \rightarrow 0.
\end{align*}
Thus, by the dominating convergence theorem, the limit solution is scalar flat, which is a contradiction to the uniform-growth condition.
\end{proof}

\begin{proof}({\bf Theorem \ref{mt2}})
Applying Lemma \ref{picnamtrick} with the function $\psi(s)=s\ln(1+s)^{p}$, $0\leq p\leq 1$ (it is easy to check that it is nondecreasing and $\int_{1}^{\infty}\frac{1}{\psi(s)}ds=\infty$) and Lemma \ref{picmvi} yields the result.
\end{proof}
\begin{remark} If $p=0$ we recover the first half of Theorem \ref{integralcond}.
\end{remark}

\def\cprime{$'$}
\bibliographystyle{plain}
\bibliography{bio}

\end{document}